\documentclass[a4paper,11pt,reqno]{amsart}%
\usepackage{amsfonts}
\usepackage{amsthm}
\usepackage{hyperref}
\usepackage{amssymb}
\usepackage{graphicx, texdraw,color}
\usepackage{float}
\usepackage{todonotes}
\usepackage{eqparbox,array}
\usepackage{dcolumn}
\usepackage{amsmath}
\usepackage{verbatim}%
\usepackage{tikz}
\usetikzlibrary{arrows}

\setcounter{MaxMatrixCols}{30}
\newcolumntype{r}{D{.}{.}{-1}}

\newtheorem{theorem}{Theorem}[section]

\theoremstyle{plain}

\newtheorem{corollary}[theorem]{Corollary}

\theoremstyle{definition}
\newtheorem{definition}[theorem]{Definition}
\newtheorem{example}[theorem]{Example}
\newtheorem{construction}[theorem]{Construction}

\theoremstyle{plain}
\newtheorem{lemma}[theorem]{Lemma}

\newtheorem{proposition}[theorem]{Proposition}

\theoremstyle{remark}
\newtheorem{remark}[theorem]{Remark}
\numberwithin{equation}{section}

\DeclareMathOperator{\trop}{trop}
\DeclareMathOperator{\Aut}{Aut}
\DeclareMathOperator{\ft}{ft}
\DeclareMathOperator{\mult}{mult}
\DeclareMathOperator{\id}{id}

\newcommand{\C}{\mathbb C}
\def\H{\mathbb H}
\def\SL{{\rm SL}}

\newcommand{\cC}{\mathcal{C}}
\newcommand{\cE}{\mathcal{E}}
\newcommand {\PP}{{\mathbb P}}
\newcommand {\RR}{{\mathbb R}}

\newcommand {\Perm}{{\mathbb S}}
\newcommand {\NN}{{\mathbb N}}
\newcommand{\Z}{\mathbb Z}

\setlength{\oddsidemargin}{1cm}
\setlength{\evensidemargin}{1cm}
\setlength{\topmargin}{0cm}
\setlength{\textheight}{22.5cm}
\setlength{\textwidth}{14.2cm}
\begin{document}
\title[Tropical mirror symmetry for elliptic curves]{Tropical mirror symmetry for elliptic curves}
\author[Janko B\"{o}hm, Kathrin Bringmann, Arne Buchholz, Hannah Markwig]{Janko B\"{o}hm, Kathrin Bringmann, Arne Buchholz, Hannah Markwig}
\address{Janko B\"ohm, Fachbereich Mathematik,
Universit\"{a}t Kaiserslautern, Postfach 3049, 67653 Kaiserslautern, Germany}
\email{boehm@mathematik.uni-kl.de}
\address{Kathrin Bringmann, Mathematical Institute, University of Cologne, Weyertal 86-90, 50931 Cologne, Germany}
\email{kbringma@math.uni-koeln.de}
\address {Arne Buchholz, Cluster of Excellence $M^2CI$, Fachrichtung Mathematik,
  Universit\"at des Saarlandes, Postfach 151150, 66041 Saarbr\"ucken, Germany}
\email {buchholz@math.uni-sb.de}
\address {Hannah Markwig, Universit\"at des Saarlandes\\ Fachrichtung Mathematik\\ Postfach 151150, 66041 Saarbr\"ucken\\ Germany}
\email {hannah@math.uni-sb.de}


\thanks{2010 Math subject classification: Primary 14J33, 14N35, 14T05, 11F11, 81T18; Secondary 14H30, 14N10, 14H52, 14H81}
\keywords{Mirror symmetry, elliptic curves, Feynman integral, tropical geometry, Hurwitz numbers, quasimodular forms}

\begin{abstract}
Mirror symmetry relates Gromov-Witten invariants of an elliptic curve with certain integrals over Feynman graphs \cite{Dij95}. We prove a tropical generalization of mirror symmetry for elliptic curves, i.e., a statement relating certain labeled Gromov-Witten invariants of a tropical elliptic curve to more refined Feynman integrals.
This result easily implies the tropical analogue of the mirror symmetry statement mentioned above and, using the necessary Correspondence Theorem, also the mirror symmetry statement itself. 
In this way, our tropical generalization leads to an alternative proof of mirror symmetry for elliptic curves. We believe that our approach via tropical mirror symmetry naturally carries the potential of being generalized to more adventurous situations of mirror symmetry. 
Moreover, our tropical approach has the advantage that all involved invariants are easy to compute.
Also, as a side product, we can give a combinatorial characterization of Feynman graphs for which the corresponding integrals are zero. More generally, the tropical mirror symmetry theorem gives a natural interpretation of the A-model side (i.e., the generating function of Gromov-Witten invariants) in terms of a sum over Feynman graphs. Our technique for computing Feynman integrals inspired Goujard and M\"oller to prove further quasimodularity results \cite{GM16}. By our theorem, this quasimodularity result becomes meaningful on the A-model side as well. 
Our theoretical results are complemented by a \textsc{Singular} package including several procedures that can be used to compute Hurwitz numbers of the elliptic curve as integrals over Feynman graphs.

\end{abstract}
\maketitle
\tableofcontents

\section{Introduction}

 Mirror symmetry is a deep symmetry relation motivated by dualities in string theory.
 Many results and conjectures of different flavors are related to mirror symmetry (see e.g.\ \cite{ FTY13, FLTZ12,GS06, HMS09, MSTG10,RIMS10,KM13}).
 Here, we focus on statements relating Gromov-Witten invariants of a variety $X$ to certain integrals on a mirror partner $X^\vee$.

Tropical geometry has proved to be an interesting new tool for mirror symmetry (see e.g.\ \cite{Abo09, Gro09, GS06, GS07, MSTG10}), in particular through the 
famous Gross-Siebert programme offering a new tool to prove such 
relations while at the same time aiming at the construction of new 
mirror pairs.
 Our paper can be viewed as a sequel and extension of Gross' paper \cite{Gro09}, where he provides tropical methods for the study of mirror symmetry of $\PP^2$.
 The main purpose of his paper is of a philosophical nature: he suggests tropical geometry as a new and worthwhile method for the study of mirror symmetry.
  More precisely, you can find (a version of) the following triangle in the introduction of \cite{Gro09}:
 

\[\scalebox{0.90}{
\begin{tikzpicture}[<->,>=stealth',shorten >=1pt,auto]
\coordinate (a) at (0,0);
\coordinate (b) at (-3.4,3.5);
\coordinate (c) at (3.4,3.5);
\node(1) at (a)  {\begin{tabular}{c}tropical\\ GW-invariants\end{tabular}};
\node(2) at (b) {\begin{tabular}{c}Gromov-Witten\\ invariants\end{tabular}};
\node(3) at (c)  {\begin{tabular}{c}Feynman\\ integrals\end{tabular}};
\path[every node/.style={}]
(1) edge node[left,sloped, anchor=center] {\begin{tabular}{c}Correspondence\\ Theorem\end{tabular}} (2)
(2) edge [dashed] node {Mirror symmetry} (3)
(3) edge node[right,sloped, anchor=center] {} (1);
\end{tikzpicture}
}
\]

The relation between Gromov-Witten invariants and integrals (the top arrow) is a consequence of mirror symmetry. 
Tropical geometry comes in naturally, because there are many instances of Correspondence Theorems that relate Gromov-Witten invariants with their tropical analogues (the first of these is due to Mikhalkin \cite{Mi03}).
The connection between tropical geometry and integrals is in general yet to be understood.

Gross studies the triangle in the situation of $\PP^2$: here, the mirror is $(\mathbb{C}^\ast)^2$.  For this case, statements relating Gromov-Witten invariants to integrals are known already \cite{Bar00}; but again, the purpose is to a lesser extent to give a new proof of this mirror symmetry relation, but to outline a new path for future progress in mirror symmetry.
In the case of $\PP^2$, the mirror symmetry relation involves descendant Gromov-Witten invariants of $\PP^2$. Only a partial Correspondence Theorem is proved to relate some of these invariants to their tropical counterparts \cite{MR08}. Gross concentrates on proving the right arrow in his situation, i.e., he provides a natural connection between integrals and tropical Gromov-Witten invariants. This connection very roughly relates monomials in a big generating function that yield a nonzero contribution to an integral with pieces of tropical curves that glue to one big tropical curve satisfying the requirements. The heart of the argument is thus a purely combinatorial hunt of monomials respectively pieces of tropical curves, and the fact that both sides can be boiled down to combinatorics that fits together very naturally strongly recommends the tropical approach for future experiments in mirror symmetry.
However, since the existing Correspondence Theorems are not sufficient yet to cover the whole situation needed for mirror symmetry of $\PP^2$, this exciting new approach does not give an alternative proof of the mirror symmetry statement for $\PP^2$ (the top arrow) yet.

\vspace{0.2cm}

It is our purpose to demonstrate that tropical geometry can actually lead to a complete alternative proof of mirror symmetry, and that this approach is very natural and requires not much more than a careful analysis of the underlying combinatorics. We prove a tropical mirror symmetry theorem for tropical elliptic curves that in particular implies mirror symmetry for elliptic curves. Furthermore, we pursue our approach to prove new results about the involved generating functions, most importantly the quasimodularity of certain Feynman integrals resp.\ generating functions of Hurwitz numbers.

\vspace{0.2cm}

We study the triangle above for elliptic curves.
As in the case of $\PP^2$, mirror symmetry of elliptic curves is known and can to the best of our knowledge even be considered as folklore in mirror symmetry \cite{Dij95}, i.e., in principle the top arrow is taken care of already (see Theorem \ref{thm-mirror}). The known proof is inspired by physics and uses quantum field theory.

We provide the alternative route via the left and right arrows.
The Gromov-Witten invariants involved in the upper left vertex of the triangle are nothing but Hurwitz numbers --- numbers of covers of an elliptic curve having simple ramification above some fixed branch points.
The integrals in the upper right vertex are certain integrals over Feynman graphs.

Correspondences between Hurwitz numbers and their tropical counterparts have been studied \cite{BBM10, CJM10}, and an easy generalization yields a Correspondence Theorem that is suitable for our situation (see Theorem \ref{thm-corres}). As in the case of $\PP^2$, more interesting is the right arrow.
It turns out that a more general formulation of mirror symmetry is more natural on the tropical side.
We prove a tropical mirror symmetry theorem (Theorem \ref{thm-refined}) relating numbers of labeled tropical covers with refined Feynman integrals. 
A careful analysis of the combinatorial principles underlying the count of labeled tropical covers on the one hand and nonzero contributions to refined integrals over Feynman graphs on the other hand reveals that they can be related naturally. The right arrow then follows from our tropical mirror symmetry theorem, see Theorem \ref{thm-tropmirror}. 

To sum up, for the case of elliptic curves, we complete the picture of the triangle as follows:

\[\scalebox{0.90}{
\begin{tikzpicture}[<->,>=stealth',shorten >=1pt,auto]
\coordinate (a) at (0,0);
\coordinate (b) at (-3.4,5);
\coordinate (c) at (3.4,5);
\coordinate (d) at (9.4,5);
\coordinate (e) at (6,0);
\coordinate (f) at (5.25,2.5);
\coordinate (g) at (3.5,2.5);
\node(1) at (a)  {\begin{tabular}{c}tropical\\ Hurwitz invariants\end{tabular}};
\node(2) at (b) {\begin{tabular}{c}Hurwitz\\ numbers\end{tabular}};
\node(3) at (c)  {\begin{tabular}{c}Feynman\\ integrals\end{tabular}};
\node(4) at (d)  {\begin{tabular}{c}refined \\ Feynman integrals\end{tabular}};
\node(5) at (e)  {\begin{tabular}{c}numbers of \\ labeled tropical covers\end{tabular}};
\node(6) at (f)  {};
\node(7) at (g)  {};
\path[every node/.style={}]
(1) edge node[left,sloped, anchor=center] {\begin{tabular}{c}Correspondence\\ Theorem  \ref*{thm-corres} \end{tabular}} (2)
(2) edge [dashed] node {Mirror symmetry} (3)
(3) edge node[right,sloped, anchor=center] {\begin{tabular}{c}Theorem \ref*{thm-tropmirror}\\ \phantom{m}\end{tabular}} (1)
(4) edge node[right,sloped, anchor=center] {\begin{tabular}{c} Tropical Mirror \\ Symmetry Thm. \ref*{thm-refined}\end{tabular}} (5)
(6) edge[double distance=4pt,->,>=implies] node[right,sloped, anchor=center] {} (7);
\end{tikzpicture}
}
\]

We provide proofs for all solid arrows, in particular this implies the dashed arrow, the mirror symmetry statement for elliptic curves. We thus provide a complete and natural combinatorial proof of mirror symmetry for elliptic curves by means of tropical geometry. 


Note that our case is the first instance where tropical mirror symmetry is understood for a compact Calabi-Yau variety, and for arbitrary genus Gromov-Witten invariants.

\vspace{0.2cm}

Furthermore, our study of Feynman integrals can be pursued in another direction related to number theory.
In number theory, the study of modular forms and quasimodular forms is of great interest. Let us just mention a few examples. In VOA theory, $1$-point functions are quasimodular \cite{DMN}. 
Conversely, every quasimodular form can be written in terms of $1$-point functions. Further examples occur in the study of $s\ell(n|n)^{\land}$ characters \cite{BFM} or in curve counting problems on Abelian surfaces (see e.g. \cite{AR}).
Knowing modularity results presents one with deep tools, like Sturm's Theorem or the valence formula, to prove identities, congruences, or the asymptotic behavior of arithmetically interesting functions. 
Most relevant for this paper is that the generating function for Hurwitz numbers of elliptic curves is a quasimodular form (see e.g. \cite{Dij95}, \cite{KZ95}). 
Via mirror symmetry, this generating function equals a sum of Feynman integrals, where each summand corresponds to a graph. To the best of our knowledge, the quasimodularity of the individual summands has not been known. It follows from \cite{GM16} that the summands $I_{\Gamma, \Omega}$ as defined in Definition \ref{def-int}, Equation (\ref{eq-Igamma}) are quasimodular (of mixed weight).
Other Feynman integrals related to mirror symmetry have been proved to be quasimodular forms, too \cite{Li12}. 

Note that the Tropical Mirror Symmetry Theorem naturally gives an interpretation of the generating function of Hurwitz numbers in terms of a sum over Feynman graphs --- we can sum over all tropical covers whose combinatorial type is a fixed Feynman graph.
In this way, the quasimodularity result becomes meaningful on the A-model side as well: The generating function of Hurwitz numbers for a fixed combinatorial type is quasimodular (of mixed weight). 

As a side product we also give a combinatorial characterization of graphs whose corresponding Feynman integral is zero: we prove in Corollary \ref{cor-bridgegraph} that a graph yields a zero Feynman integral if and only if it contains a bridge. 


Our theoretical results are complemented by the package {\sc ellipticcovers.lib} \cite{BBBM} for the computer algebra system {\sc Singular} \cite{DGPS}, which contains several procedures that can be used to compute Hurwitz numbers of the elliptic curve as integrals over Feynman graphs. 


\vspace{0.4cm}

Our paper is organized as follows.
In Section \ref{subsec-mirror} we define the invariants and state the Mirror Symmetry Theorem \ref{thm-mirror}, i.e., the top arrow in the triangle above.
In Section \ref{subsec-tropcovers} we define tropical Hurwitz numbers and state the Correspondence Theorem \ref{thm-corres}, i.e., the left arrow in the triangle. We also state Theorem \ref{thm-tropmirror}, the right arrow in the triangle. 
In Section \ref{subsec-labeled}, we introduce the invariants involved in our generalization of theorem \ref{thm-tropmirror}: labeled tropical covers (tropical covers with some extra structure) and a more refined version of the Feynman integrals. 
In the same section, we can finally also state our main result, the Tropical Mirror Symmetry Theorem \ref{thm-refined} involving labeled tropical covers and refined Feynman integrals. We also deduce Theorem \ref{thm-tropmirror} using Theorem \ref{thm-refined}.
In Section \ref{sec-prop}, we take a closer look at the involved Feynman integrals.
A central statement is Theorem \ref{thm-prop} that expresses the propagator as a series in a special nice way. This helps us to boil down the computation of the refined Feynman integrals (using a coordinate change and the residue theorem) to a purely combinatorial hunt of monomials in a big generating function. 
The precise statement of the monomial hunt can be found in Lemma \ref{lem-seriesconstterm}.
In Section \ref{subsec-bij} finally, we establish our bijection relating labeled tropical covers and nonzero contributions to refined Feynman integrals. The precise formulation can be found in Theorem \ref{thm-bij}, Theorem \ref{thm-refined} follows easily from this. Construction \ref{const-bij} gives a hands-on algorithm how to produce a tropical cover given a tuple contributing to the integral, Lemmas \ref{lem-themap} and \ref{lem-inverse} prove that this algorithm indeed produces a bijective map.
In Section \ref{sec-quasimod} we review the quasimodularity of the summands of the generating function of Hurwitz numbers corresponding to Feynman graphs \cite{GM16}.

In the Appendix, we prove the Correspondence Theorem
\ref{thm-corres}. It could be deduced from the Correspondence Theorem in
\cite{BBM10}, for the sake of completeness however we include our own proof here
which uses combinatorial methods involving the symmetric group.


\vspace{0.1in} \noindent\emph{Acknowledgements}. 
We would like to thank Arend Bayer, Erwan Brugall\'{e}, Elise Goujard, Albrecht Klemm, Laura Matusevich, Martin M\"oller, and Rainer Schulze-Pillot for helpful discussions. Part of this work was accomplished during the third and fourth author's stay at the Max-Planck-Institute for Mathematics in Bonn. We thank the MPI for hospitality.
The research of the second author is supported by the Alfried Krupp Prize for Young University Teachers of the Krupp foundation and the research leading to these results receives funding from the European Research Council under the European Union's Seventh Framework Programme (FP/2007-2013) / ERC Grant agreement n. 335220 - AQSER. 
The fourth author is partially supported by DFG-grant MA 4797/3-2. The first and fourth author acknowledge support of DFG-SPP 1489.
We thank an anonymous referee for useful comments on an earlier version of this work.

\section{Tropical Mirror Symmetry for elliptic curves}

\subsection{Mirror Symmetry for elliptic curves}\label{subsec-mirror}
In this subsection, we define the relevant invariants (i.e., Hurwitz numbers and Feynman integrals) and present a precise statement of the top arrow of the triangle in the introduction. 

Hurwitz numbers count branched covers of non-singular curves with a given
ramification
profile over fixed points. In this paper, we consider covers of elliptic curves. Hurwitz numbers are naturally topological invariants,
in particular they do not depend on the position of the branch points as long as
these
are pairwise different. 
Moreover, since all complex elliptic curves are
homeomorphic to the real torus, numbers of covers of an
elliptic curve do not depend on the choice of the base curve. 
We thus fix an arbitrary complex elliptic curve $\cE$.

Let $\cC$ be a non-singular curve of genus $g$ and $\phi:\cC\rightarrow\cE$ a
cover.
We denote by $d$ the degree of $\phi$, i.e., the
number of preimages of a generic point in $\cE$.
For our purpose, it is sufficient to
consider covers which are \emph{simply ramified}, that is, over any branch
point exactly two sheets of the map come together and all others stay separate.
In other words, the \emph{ramification profile} (i.e. the partition of the
degree indicating the multiplicities of the inverse images of a branch point) of
a simple branch point is $(2,1,\ldots,1)$. 
It follows from the Riemann-Hurwitz formula (see e.g.\ \cite{Har77}, Corollary IV.2.4) that a simply ramified cover of $\cE$ has exactly $2g-2$ branch points.
 Two covers $\varphi:\cC\rightarrow\cE$ and
$\varphi':\cC'\rightarrow\cE$ are isomorphic if there exists an isomorphism of
curves $\phi:\cC\rightarrow\cC'$ such that $\varphi'\circ\phi=\varphi$.

\begin{definition}[Hurwitz numbers]\label{def-alghurwitz}
Fix $2g-2$ points $p_1,\ldots,p_{2g-2}$ in $\cE$. We define the \emph{Hurwitz number} $N_{d,g}$ to be the weighted number of (isomorphism classes of) simply ramified covers $\phi:\cC\rightarrow \cE$ of degree $d$, where $\cC$ is a connected curve of genus $g$, and the branch points of $\phi$ are the points $p_i$, $i=1,\ldots,2g-2$. We count each such cover $\phi$ with weight $|\Aut(\phi)|$.
\end{definition}
For more details on branched covers of elliptic curves and Hurwitz numbers, see e.g.\ \cite{RY10}.

\begin{remark}\label{rem-branchpointsmarked}
 Note that by convention, we mark the branch points $p_i$ in this definition. In the literature, you can also find definitions without this convention, leading to a factor of $(2g-2)!$ when compared with our definition.
\end{remark}

\begin{definition}
 We package the Hurwitz numbers of Definition \ref{def-alghurwitz} into a generating function as follows:
$$ F_g(q):=\sum_{d=1}^\infty N_{d,g} q^{2d}.$$
\end{definition}

The mirror symmetry statement relates the generating function $F_g(q)$ to certain integrals which we are going to define now. We start by defining the function which we are going to integrate.
\begin{definition}[The propagator]\label{def-prop}
 We define the \emph{propagator} $$P(z,q):=\frac{1}{4\pi^2}\wp(z,q)+\frac{1}{12}E_2(q^2)$$ in terms of the Weierstra\ss{}-P-function $\wp$ and the Eisenstein series $$E_2(q):=1-24\sum_{d=1}^\infty \sigma(d)q^d.$$ Here, $\sigma=\sigma_1$ denotes the sum-of-divisors function $\sigma(d)=\sigma_1(d)=\sum_{m|d}m$.
\end{definition}
The variable $q$ above should be considered as a coordinate of the moduli space of elliptic curves, the variable $z$ is the complex coordinate of a fixed elliptic curve. (More precisely, $q=e^{i\pi \tau}$, where $\tau \in \C$ is the parameter in the upper half plane in the well-known definition of the Weierstra\ss{}-P-function.)

\begin{definition}[Feynman graphs and integrals]\label{def-int}
 A \emph{Feynman graph} $\Gamma$ of genus $g$ is a trivalent connected graph of genus $g$. For a Feynman graph, we throughout fix a reference labeling $x_1,\ldots,x_{2g-2}$ of the $2g-2$ trivalent vertices and a reference labeling $q_1,\ldots,q_{3g-3}$ of the edges of $\Gamma$.

For an edge $q_k$ of $\Gamma$ connecting the vertices $x_i$ and $x_j$, we define a function
$$P_k:=P(z_i-z_j,q),$$ 
where $P$ denotes the propagator of Definition \ref{def-prop} (the choice of sign i.e., $z_i-z_j$ or $z_j-z_i$ plays no role, more about this in Section \ref{sec-prop}).
Pick a total ordering $\Omega$ of the vertices and starting points of the form $iy_1,\ldots, iy_{2g-2}$ in the complex plane, where the $y_j$ are pairwise different small real numbers.
We define integration paths $\gamma_1,\ldots,\gamma_{2g-2}$ by $$\gamma_j:[0,1]\rightarrow \mathbb{C}:t\mapsto iy_j+t,$$ such that the order of the real coordinates $y_j$ of the starting points of the paths equals $\Omega$. 
We then define the integral \begin{equation}I_{\Gamma,\Omega}(q):= \int_{z_j\in \gamma_j} \prod_{k=1}^{3g-3} \left(-P_k\right).\label{eq-Igamma}\end{equation}
Finally, we define
$$I_\Gamma(q)= \sum_{\Omega} I_{\Gamma,\Omega}(q),$$ where the sum runs over all $(2g-2)!$ orders of the vertices. 
\end{definition}

The following is the precise statement of the top arrow in the triangle in the introduction (see Theorem 3 of \cite{Dij95}):
\begin{theorem}[Mirror Symmetry for elliptic curves]\label{thm-mirror}
Let $g>1$. For the definition of the invariants, see \ref{def-alghurwitz} and \ref{def-int}. We have  $$ F_g(q)=\sum_{d=1}^\infty N_{d,g} q^{2d}= \sum_{\Gamma} I_\Gamma(q)\cdot \frac{1}{|\Aut(\Gamma)|},$$ where $\Aut(\Gamma)$ denotes the automorphism group of $\Gamma$ and the sum goes over all trivalent graphs $\Gamma$ of genus $g$.
\end{theorem}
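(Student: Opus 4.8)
The plan is to avoid the original quantum-field-theoretic argument entirely and instead factor the identity through tropical geometry, realizing both sides as weighted counts of the same combinatorial objects. The left-hand side $F_g(q)=\sum_d N_{d,g}q^{2d}$ is a generating function of Hurwitz numbers, which by the monodromy correspondence are weighted counts of tuples of permutations in $\Perm_d$ (two generators for the handles of the torus together with $2g-2$ transpositions for the simple branch points) satisfying the standard surface-group relation and acting transitively. My first step is to pass to the tropical model: a tropical cover of a circle $S^1$ (the tropical elliptic curve) is a graph of genus $g$ mapping harmonically to the circle, balanced at each vertex, with its $2g-2$ trivalent vertices mapping to the fixed branch points in the cyclic order recorded by an order $\Omega$. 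Invoking the Correspondence Theorem \ref{thm-corres} (the left arrow), I replace $N_{d,g}$ by the corresponding weighted count of such tropical covers, so that $F_g$ becomes a generating series of tropical Hurwitz numbers.

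The core of the argument is the right arrow, i.e.\ showing that this tropical generating series equals $\sum_\Gamma I_\Gamma/|\Aut(\Gamma)|$. Here I would first expand the propagator of Definition \ref{def-prop} as an explicit Fourier/$q$-series (Theorem \ref{thm-prop}), writing each factor $-P_k$ as a double series in a multiplicative variable attached to the edge $q_k$ and in $q$. Substituting these expansions into the integral \eqref{eq-Igamma} and passing to multiplicative coordinates, the nested contour integrals over the paths $\gamma_j$ collapse, via the residue theorem, to the extraction of the constant term of a single large product of geometric-type series. This is the monomial hunt made precise in Lemma \ref{lem-seriesconstterm}: only those monomials survive whose exponents satisfy a system of balancing/flow conditions along the edges of $\Gamma$.

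The decisive step is then to match these surviving monomials with tropical covers. I would build an explicit weight-preserving bijection (Theorem \ref{thm-bij}, realized by Construction \ref{const-bij}) sending a contributing monomial, together with the vertex order $\Omega$, to a labeled tropical cover whose source is $\Gamma$: the edge exponents become the weights (local degrees) of the edges, the balancing conditions at each vertex translate into harmonicity of the tropical map, and $\Omega$ fixes the positions of the vertex images along the circle. Verifying that the constant-term contribution of each monomial equals the tropical multiplicity of the associated cover proves the refined statement, Theorem \ref{thm-refined}. Summing over all $(2g-2)!$ orders $\Omega$ and dividing by $|\Aut(\Gamma)|$ converts labeled covers into honest unlabeled tropical covers, yielding Theorem \ref{thm-tropmirror}; combined with the Correspondence Theorem this gives exactly $F_g(q)=\sum_\Gamma I_\Gamma/|\Aut(\Gamma)|$.

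I expect the main obstacle to be the bijection and its bookkeeping: one must check that the signs (the factors $-P_k$ and the orientation convention $z_i-z_j$ versus $z_j-z_i$), the automorphism weights $|\Aut(\Gamma)|$ and $|\Aut(\phi)|$, and the $(2g-2)!$ orderings all cancel consistently, and that each contributing monomial corresponds to a genuinely connected, transitive tropical cover of the correct genus. Subtle points include controlling convergence so that the termwise residue computation is justified, and handling graphs containing a bridge, whose integral should vanish identically in accordance with the absence of a valid connected cover.
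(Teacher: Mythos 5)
Your proposal follows exactly the paper's own route: the left arrow via the Correspondence Theorem \ref{thm-corres}, the right arrow via the propagator expansion of Theorem \ref{thm-prop}, the residue/constant-term reduction of Lemma \ref{lem-seriesconstterm}, and the weight-preserving bijection of Theorem \ref{thm-bij} (Construction \ref{const-bij}), assembled into Theorems \ref{thm-refined} and \ref{thm-tropmirror}. The plan is correct and is essentially identical to the argument given in the paper, including the points you flag as delicate (sign/orientation conventions, the automorphism bookkeeping in passing from labeled to unlabeled covers, and the vanishing for bridge graphs handled in Corollary \ref{cor-bridgegraph}).
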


\subsection{Tropical covers and Hurwitz numbers}\label{subsec-tropcovers}

We give a proof of Theorem \ref{thm-mirror} by a detour to tropical geometry and tropical mirror symmetry. We first relate Hurwitz numbers to tropical Hurwitz numbers.

\begin{definition}[Tropical curves]
 A (generic) \emph{tropical curve} $C$ (without ends) is a connected, finite, trivalent, metric graph.
Its
\emph{genus} is given by its first Betti number. The
\emph{combinatorial type} of a curve is its homeomorphism class, i.e., the
underlying graph without lengths on the edges.
\end{definition}

A tropical elliptic curve consists of one edge forming a circle of certain
length. We will fix a tropical elliptic curve $E$ (e.g. one having length $1$).
Moreover, to fix notation, in the following we denote by $C$ a tropical curve of genus $g$ and
combinatorial type $\Gamma$.

\begin{definition}[Tropical covers]\label{def-tropcover}
A map $\pi:C\rightarrow E$ is a \emph{tropical cover} of $E$ if it is
continuous,
non-constant, integer affine on each edge and respects the \emph{balancing
condition} at every vertex $P\in C$:

For an edge $e$ of $C$ denote by $w_e$ the \emph{weight of $e$}, i.e., the
absolute
value of the slope of $\pi_{|e}$. Consider a (small) open neighbourhood $U$ of
$p=\pi(P)$, then $U$ combinatorially consists of $p$ together with two rays $r_1$  and $r_2$, left and right of $p$.
Let $V$ be the connected component of $\pi^{-1}(U)$ which contains $P$. Then
$V$ consists of $P$ adjacent to three rays). Then $\pi$ is balanced at $P$ if
$$\sum_{R \mbox{{ \small maps to }} r_1}
w_R=\sum_{R \mbox{ {\small maps to }} r_2}w_R,$$ where $R$ goes over the three rays adjacent to
$P$ and rays inherit their weights from the corresponding edges.

The \emph{degree} of a cover is the weighted number of preimages of a generic
point: For all $p\in E$ not having any vertex of $C$ as preimage we have
$$d=\sum_{P\in C:\pi(P)=p} w_{e_P},$$ where $e_P$ is the edge of $C$ containing
$P$. The images of the vertices of $C$ are called the \emph{branch points} of
$\pi$.
\end{definition}

\begin{example}
 Figure \ref{fig-tropcover} shows a tropical cover of degree $4$ with a genus $2$
source curve. The red numbers close to the vertex $P$ are the weights of the corresponding edges, the black numbers denote the lengths. The cover is balanced at
$P$ since there is an edge of weight $3$ leaving in one direction and an
edge of
weight $2$ plus an edge of weight $1$ leaving in the opposite direction.
\end{example}

We can see that the length of an edge of $C$ is determined by its weight and
the length of its image. 
We will therefore in the following not specify edge lengths anymore.

\begin{figure}
 \input{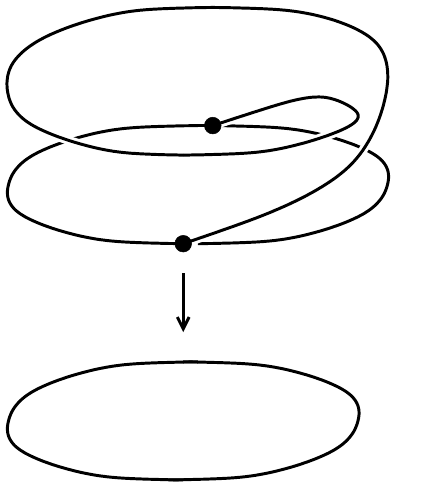_t}
\caption[A tropical cover]{A tropical cover of degree $4$ with genus $2$ source
curve.}\label{fig-tropcover}
\end{figure}

\begin{definition}[Isomorphisms of curves and covers]
 An \emph{isomorphism of tropical curves} is an isometry of metric graphs. Two
covers $\pi:C\rightarrow E$ and $\pi':C'\rightarrow E$ are isomorphic if there
is an isomorphism of curves $\phi:C\rightarrow C'$ such that $\pi'\circ\phi=\pi$.
\end{definition}

As usual when counting tropical objects we have to weight them with a certain
multiplicity.

\begin{definition}[Multiplicities]\label{def-mult}
 The multiplicity of a cover $\pi:C\rightarrow E$ is defined to be
$$\mult(\pi):=\frac{1}{|\Aut(\pi)|}\prod_e w_e,$$ where the product goes over all edges $e$ of $C$ and $\Aut(\pi)$ is the automorphism group of $\pi$.
\end{definition}


\begin{definition}[Tropical Hurwitz numbers]\label{def-trophurwitz}
Fix branch points $p_1,\ldots,p_{2g-2}$ in the tropical, elliptic curve $E$.
The \emph{tropical Hurwitz number} $N_{d,g}^{\trop}$ is the weighted number of
isomorphism classes of degree $d$ covers $\pi:C\rightarrow E$ having their branch
points at the $p_i$, where $C$ is a curve of genus $g$ :
$$N_{d,g}^{\trop}:=\sum_{\pi:C\rightarrow E}\mult(\pi).$$
\end{definition}

The following is the precise formulation of the left arrow of the triangle in the introduction:
\begin{theorem}[Correspondence Theorem]\label{thm-corres}
The algebraic and tropical Hurwitz numbers of simply ramified covers of an elliptic curve coincide (see Definition \ref{def-alghurwitz} and \ref{def-trophurwitz}), i.e., we have 
 $$N_{d,g}^{\trop}=N_{d,g}.$$ 
\end{theorem}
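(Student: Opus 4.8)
The plan is to evaluate both sides through the monodromy representation and to match the resulting counts combinatorially in the symmetric group $\Perm_d$. First I would recall the classical translation of $N_{d,g}$ into group theory. A degree $d$ simply ramified cover $\phi:\cC\to\cE$ with branch points $p_1,\dots,p_{2g-2}$ is, up to isomorphism, the same datum as a transitive homomorphism $\rho:\pi_1(\cE\setminus\{p_1,\dots,p_{2g-2}\})\to\Perm_d$ taken up to conjugation, where each loop $\gamma_i$ around $p_i$ maps to a transposition because the ramification is simple. Using the presentation
$$\pi_1\big(\cE\setminus\{p_1,\dots,p_{2g-2}\}\big)=\langle a,b,\gamma_1,\dots,\gamma_{2g-2}\mid [a,b]\,\gamma_1\cdots\gamma_{2g-2}=1\rangle,$$
together with the standard fact that weighting each cover by $1/|\Aut(\phi)|$ turns the count of isomorphism classes into $1/d!$ times the count of labeled tuples, I obtain
$$N_{d,g}=\frac{1}{d!}\,\#\Big\{(A,B,\tau_1,\dots,\tau_{2g-2})\in\Perm_d^2\times T^{2g-2}\ :\ [A,B]\,\tau_1\cdots\tau_{2g-2}=\id,\ \langle A,B,\tau_i\rangle\text{ transitive}\Big\},$$
where $T\subset\Perm_d$ denotes the set of transpositions.

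Second, I would give the analogous combinatorial description of $N_{d,g}^{\trop}$. Cutting the circle $E$ at the branch points splits it into $2g-2$ arcs, and over each arc a tropical cover $\pi:C\to E$ restricts to an unramified cover, i.e.\ to a disjoint union of weighted segments whose weights form a partition $\mu_j\vdash d$. Simple ramification forces the balancing at each trivalent vertex to be a single merge or split, so that consecutive partitions $\mu_{j-1},\mu_j$ differ by combining two parts into their sum or the reverse. Closing the circle up requires in addition a weight-preserving matching of the segments over a chosen non-branch base point; all of this data, together with the product-of-weights and automorphism factors in $\mult(\pi)$, yields a purely combinatorial expression for $N_{d,g}^{\trop}=\sum_\pi\mult(\pi)$.

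Third -- and this is the heart of the argument -- I would produce a weight-preserving bijection between the two pictures. The dictionary is that $A:=\rho(a)$ is the monodromy around the circle $E$, its cycles are exactly the edges of $C$ lying over the base arc, and the length of such a cycle equals the weight of the corresponding edge, since a $w$-cycle glues $w$ sheets into a single tropical edge of weight $w$. Recording the cycle type of the partial product $A\tau_1\cdots\tau_j$ then recovers the partition $\mu_j$ over the $j$-th arc, multiplication by the transposition $\tau_j$ is exactly the merge or split at the vertex over $p_j$, and the second generator $B:=\rho(b)$ supplies the remaining identifications that give $C$ its genus. The content of the correspondence is the enumerative lemma that the number of labeled tuples producing a fixed tropical cover $\pi$ equals $\frac{d!}{|\Aut(\pi)|}\prod_e w_e$; dividing by $d!$ and summing over $\pi$ then reproduces $N_{d,g}^{\trop}$.

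I expect the main obstacle to be precisely the verification of this weight factor. One must show that an edge of weight $w$, corresponding to a $w$-cycle in the relevant permutation, contributes a factor $w$ because of the cyclic ambiguity in labeling the $w$ sheets inside it, and that these factors assemble to $\prod_e w_e$ while the remaining relabeling freedom is accounted for exactly by $d!/|\Aut(\pi)|$ through orbit--stabilizer. Keeping careful track of transitivity (which is the connectedness of $C$), of the role of $B$ (the monodromy transverse to the circle, hidden in the tropical limit inside the matching and genus data), and of the precise relation between $\Aut(\pi)$ and the centralizer of the tuple is where the bookkeeping is delicate; everything else reduces to the routine translation between cycles of permutations and weighted edges of $C$.
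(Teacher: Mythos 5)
Your overall strategy coincides with the paper's: both sides are reduced to counts in $\Perm_d$ via the monodromy representation, the tuples are grouped according to the tropical cover they induce, and the theorem is reduced to the single enumerative claim that the fiber over a fixed cover $\pi$ has cardinality $\frac{d!}{|\Aut(\pi)|}\prod_e w_e$. Your tuple count $(A,B,\tau_1,\dots,\tau_{2g-2})$ with $[A,B]\,\tau_1\cdots\tau_{2g-2}=\id$ is, up to ordering conventions, exactly the paper's Remark \ref{eqn-ellipticCount} with $B=\sigma$, $A=\alpha$ and the relation $\tau_{2g-2}\circ\cdots\circ\tau_1\circ\sigma=\alpha\circ\sigma\circ\alpha^{-1}$. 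Where the two arguments diverge is in how the fiber count is organized: the paper cuts the cover at the base point $p_0$ (Construction \ref{const-cutting}), invokes the known correspondence theorem for covers of $\mathbb{TP}^1$ from \cite{CJM10} to get the count of tuples $(\tau_1,\dots,\tau_{2g-2},\sigma,\sigma')$ yielding the cut cover $\tilde\pi$, and then only has to count the gluing permutations $\alpha$, which is Proposition \ref{prop-gluingFactor}: $n_{\tilde\pi,\pi}=m_1\cdots m_r\cdot|\Aut(\tilde\pi)|/|\Aut(\pi)|$. You propose to count the whole fiber over $\pi$ directly, which subsumes both steps.

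The concrete gap is in your justification of the factor $\prod_e w_e$. The "cyclic ambiguity in labeling the $w$ sheets inside" an edge accounts only for the edges lying over the base point: those correspond to the cycles of $A$ (resp.\ $\sigma$), and the cyclic subgroups $C_{w_i}$ of the centralizer of $\sigma$ are precisely what produces the factor $m_1\cdots m_r$ in the paper's gluing factor. An interior edge of $C$ is not a cycle of any single distinguished permutation, and its weight enters the count through the cut-and-join combinatorics instead: the number of transpositions $\tau_j$ realizing a prescribed merge of cycles of lengths $a$ and $b$ is $ab$, while the number realizing a prescribed split of an $(a+b)$-cycle is $a+b$ (or $(a+b)/2$ when $a=b$), and one must show these local factors telescope into $\prod_e w_e$ while the leftover symmetries (balanced forks, wieners, single-edge components, long and pseudo-wieners after regluing) match $d!/|\Aut(\pi)|$ exactly. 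This is the substantive content that the paper outsources to \cite{CJM10} and to the case analysis in the proof of Proposition \ref{prop-gluingFactor}; your sketch names the right target but the one-line heuristic does not establish it, and as stated it misattributes where most of the edge weights come from.
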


This theorem could be proved by a mild generalization of the Correspondence Theorem for Hurwitz numbers in \cite{BBM10}. For the sake of completeness, we give our own proof, using more combinatorial methods involving the symmetric group. We present it in the Appendix \ref{ap-corres}.

Using the Correspondence Theorem \ref{thm-corres}, it is obviously sufficient to prove the following theorem in order to obtain a tropical proof of Theorem \ref{thm-mirror}. This theorem can be viewed as the right arrow in the triangle in the introduction.

\begin{theorem}[Tropical curves and integrals]\label{thm-tropmirror}
 Let $g>1$. For the definition of the invariants, see Definitions \ref{def-trophurwitz} and \ref{def-int}. We have  $$ \sum_{d=1}^\infty N_{d,g}^{\trop} q^{2d}= \sum_{\Gamma} I_\Gamma(q)\cdot \frac{1}{|\Aut(\Gamma)|},$$ where the sum goes over all trivalent graphs $\Gamma$ of genus $g$.
\end{theorem}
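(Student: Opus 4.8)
\textbf{Proof proposal for Theorem \ref{thm-tropmirror}.}

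The plan is to establish an equality of generating functions by showing that, for each fixed genus-$g$ trivalent graph $\Gamma$, the contribution of tropical covers whose combinatorial type is $\Gamma$ to $\sum_d N_{d,g}^{\trop}q^{2d}$ equals $I_\Gamma(q)/|\Aut(\Gamma)|$. First I would organize the left-hand side by combinatorial type: since every tropical cover $\pi\colon C\to E$ has source $C$ of genus $g$ with some trivalent combinatorial type $\Gamma$, I can write $\sum_d N_{d,g}^{\trop}q^{2d}=\sum_\Gamma\bigl(\sum_{\pi\colon\mathrm{type}(C)=\Gamma}\mult(\pi)\,q^{2\deg(\pi)}\bigr)$. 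The goal is then the per-graph identity matching the right-hand side termwise. On the right, I would expand each $I_{\Gamma,\Omega}(q)$ by writing out the propagator $P$ as an explicit $q$-series (this is exactly the content of the forthcoming Theorem \ref{thm-prop}, which I am allowed to assume), so that the integrand $\prod_{k=1}^{3g-3}(-P_k)$ becomes a multivariable series in the $z_i$ and $q$, and each integral $\int_{z_j\in\gamma_j}$ along the horizontal paths extracts a constant term via the residue theorem.

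The core of the argument is a bijection between the monomials surviving the integration on the right and the tropical covers of type $\Gamma$ on the left, compatible with the weighting. Concretely, I would expand each factor $P_k=P(z_i-z_j,q)$ into a series whose monomials carry exponent data; integrating over the paths $\gamma_j$ forces a constant-term (residue) selection that, for a fixed order $\Omega$ of the vertices, picks out exactly those tuples of exponents encoding a consistent assignment of edge weights $w_e$ and a genuine tropical cover structure on $\Gamma$. The ordering $\Omega$ should correspond to a linear ordering of the images of the vertices (branch points) along the base $E$, i.e.\ to a way of positioning the $2g-2$ branch points on the circle; summing $I_\Gamma=\sum_\Omega I_{\Gamma,\Omega}$ over all $(2g-2)!$ orders then accounts for all combinatorial placements of branch points. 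The product of edge weights $\prod_e w_e$ in $\mult(\pi)$ should emerge from the leading coefficients in the series expansion of $P$, while the factor $q^{2d}$ with $d=\sum w_{e_P}$ should track the total degree; the exponent $2d$ (rather than $d$) matches the $q^2$ appearing inside $E_2(q^2)$ and the parity built into the propagator.

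I expect the main obstacle to be the precise matching of the two symmetry/weighting factors: reconciling $1/|\Aut(\pi)|$ (and the factor $\prod_e w_e$) on the tropical side with $1/|\Aut(\Gamma)|$ together with the sum over orders $\Omega$ on the integral side. Automorphisms of $\Gamma$ act on both the set of covers of type $\Gamma$ and the set of orderings $\Omega$, and I would need to verify that the orbit-counting (the $1/|\Aut(\Gamma)|$ normalization against the $(2g-2)!$ orders) exactly reproduces the intrinsic $1/|\Aut(\pi)|$ weighting. A clean way to handle this is to pass to the \emph{labeled} setting: I would first prove the refined statement, Theorem \ref{thm-refined}, which compares labeled tropical covers with the individual refined integrals $I_{\Gamma,\Omega}$ and thereby avoids dividing by automorphism groups, and then deduce Theorem \ref{thm-tropmirror} by summing over labelings and quotienting by $\Aut(\Gamma)$. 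Indeed the excerpt signals exactly this strategy, so the bulk of the work is the labeled bijection (the monomial hunt of Lemma \ref{lem-seriesconstterm} and the explicit construction in Construction \ref{const-bij}), after which Theorem \ref{thm-tropmirror} follows by a bookkeeping argument tracking how labels and automorphisms convert the refined identity into the stated unlabeled one.
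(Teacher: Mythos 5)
Your proposal is correct and follows essentially the same route as the paper: the paper also deduces Theorem \ref{thm-tropmirror} from the refined labeled statement (Theorem \ref{thm-refined}) by introducing the forgetful map from labeled to unlabeled covers and computing the fiber cardinality $|\ft^{-1}(\pi)|=|\Aut(\Gamma)|/|\Aut(\pi)|$ via an orbit--stabilizer argument, which is exactly the bookkeeping step you identify as the main obstacle and resolve by passing to the labeled setting. The only minor imprecision is that the sum over orders $\Omega$ is already internal to $I_\Gamma=\sum_\Omega I_{\Gamma,\Omega}$ and plays no role in the $\Aut(\Gamma)$ reconciliation, but this does not affect your argument.
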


\subsection{Labeled tropical covers}\label{subsec-labeled}

To deduce Theorem \ref{thm-tropmirror}, we prove a more general statement that implies this theorem, namely our Tropical Mirror Symmetry Theorem for elliptic curves \ref{thm-refined}.
To state the result, we need to introduce labeled tropical covers, and a refined version of the Feynman integrals from above.
This more general tropical mirror symmetry theorem is more natural on the tropical side, since the combinatorics involved in the hunt of monomials contributing to the refined Feynman integrals resp.\ in counting labeled tropical covers can be related very naturally.

\begin{definition}[Labeled tropical covers]\label{def-labeledtropcover}
Let $\pi:C\rightarrow E$ be a tropical cover as in Definition \ref{def-tropcover}. 
Let $\Gamma$ be the combinatorial type of the tropical curve $C$.
We fix a reference labeling 
 $x_1,\ldots,x_{2g-2}$ of the vertices and a reference labeling $q_1,\ldots,q_{3g-3}$ of the edges of $\Gamma$, as in Definition \ref{def-int} for Feynman graphs.
We then consider the \emph{labeled tropical cover} $\hat\pi:C\rightarrow E$, where the source curve $C$ is in addition equipped with the labeling. The important difference between tropical covers and labeled tropical covers is the definition of isomorphism: for a labeled tropical cover, we require an isomorphism to respect the labels. As usual, we consider labeled tropical covers only up to isomorphism.

The combinatorial type of a labeled tropical cover is the combinatorial type of the source curve together with the labels, i.e., a Feynman graph.
\end{definition}

\begin{example}\label{exmp-labelledCover}
Figure \ref{fig-labelledCover} shows a labeled cover of degree $4$. The edges
labeled $q_2,q_3$ and $q_6$ are supposed to have weight $1$, edge $q_1$
and $q_4$ weight $2$ and $q_5$ weight $3$. The underlying Feynman graph is the one of Figure \ref{fig-raupe}.
\begin{figure}
 \begin{center}
  \input{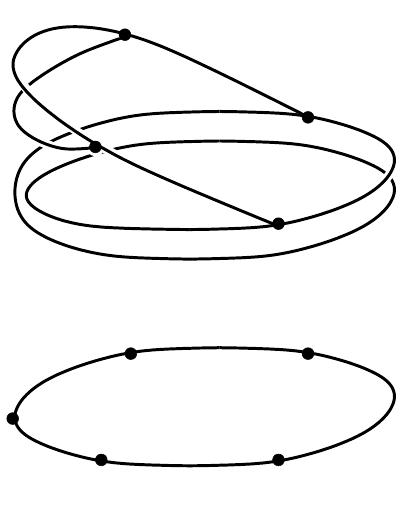_t}
 \end{center}
\caption{A labeled tropical cover of $E$.}\label{fig-labelledCover}
\end{figure}

\end{example}

The definition of the generating series $F_g(q)$ for Hurwitz numbers has to be refined accordingly:
\begin{definition}\label{def-labelednumbercover}
 We fix once and for all a base point $p_0$ in $E$. 
For a tuple $\underline{a}=(a_1,\ldots,a_{3g-3})$, we define
$N_{\underline{a},g}^{\trop}$ to be the weighted number of labeled tropical covers $\hat\pi:C\rightarrow E$ of degree $\sum_{i=1}^{3g-3}a_i$, where $C$ has genus $g$, having their branch points at the prescribed positions and satisfying
$$ \#(\hat\pi^{-1}(p_0)\cap q_i)\cdot w_i=a_i$$
for all $i=1,\ldots,3g-3$. Each labeled tropical cover is counted with multiplicity $\prod_{i=1}^{3g-3} w_i$. Here, $w_i$ denotes the weight of the edge $q_i$. We call $\underline{a}$ the \emph{branch type} of the tropical cover at $p_0$.

We also define for a Feynman graph $\Gamma$ the number $N_{\underline{a}, \Gamma}^{\trop}$ to be the weighted number of labeled tropical covers as above with source curve of type $\Gamma$.

We set 
$$F_{g}(q_1,\ldots,q_{3g-3})= \sum_{\underline{a}}N_{\underline{a},g}^{\trop} q^{2\cdot\underline{a}}.$$
Here, the sum goes over all $\underline{a}\in \NN^{3g-3}$ and $q^{2\cdot \underline{a}}$ denotes the multi-index power $q^{2\cdot \underline{a}}= q_1^{2a_1}\cdot \ldots \cdot q_{3g-3}^{2a_{3g-3}}$.

\end{definition}

\begin{example}\label{exmp-N_aContribution}
Choose for example $g=3$ and $\underline{a}=(0,2,1,0,0,1)$. Let
$\Gamma$ be the Feynman graph depicted in Figure \ref{fig-raupe}.
\begin{figure}
\begin{center}
 \input{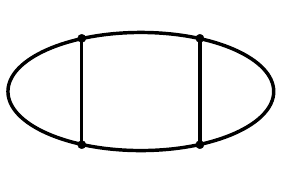_t}
\end{center}
\caption{The Feynman graph $\Gamma$.}\label{fig-raupe}
\end{figure}
Then $N_{\underline{a}, \Gamma}^{\trop}=256$. All labeled covers contributing to $N_{\underline{a}, \Gamma}^{\trop}$
together with their multiplicities are depicted in Figure
\ref{fig-N_aContribution}. The number next to each label $q_i$ stands for the weight
of the labeled edge. The white dots are the points in the fiber of $p_0$ under
$\pi$.
\begin{figure}
 \begin{center}
 \input{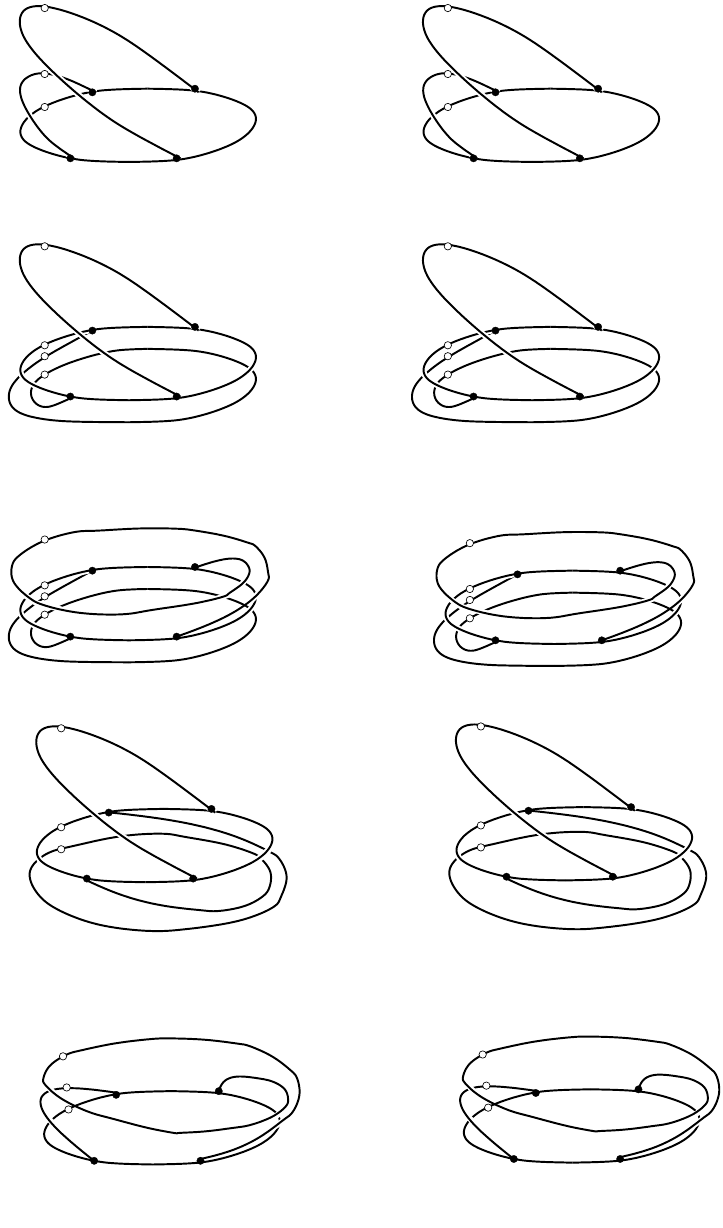_t}
 \end{center}
\caption{All labeled tropical covers contributing to
$N_{(0,2,1,0,0,1),\Gamma}^{\trop}= 256$.}\label{fig-N_aContribution}
\end{figure}

\end{example}

Similarly, we refine the definition of the integrals of Definition \ref{def-int}:
\begin{definition}\label{def-labeledint}
 Let $\Gamma$ be a Feynman graph. As usual, the vertices are labeled with $x_i$ and the edges with $q_i$.
For the edge $q_k$ of $\Gamma$ connecting the vertices $x_i$ and $x_j$, we
change the definition of the integrand to 
$$P_k:=P(z_i-z_j,q_k),$$ 
where $P$ denotes the propagator of Definition \ref{def-prop}.
For a total ordering $\Omega$ of the vertices we then define the integral $$I_{\Gamma,\Omega}(q_1,\ldots,q_{3g-3}):= \int_{z_j\in \gamma_j} \prod_1^{3g-3} \left(-P_k\right)$$ just as in Definition \ref{def-int}.

We also set $$I_\Gamma(q_1,\ldots,q_{3g-3})= \sum_{\Omega} I_{\Gamma,\Omega}(q_1,\ldots,q_{3g-3}),$$ where the sum goes over all $(2g-2)!$ orders of the vertices. 
\end{definition}

We can now present the main result of our paper, the tropical mirror symmetry theorem for elliptic curves in its refined version:

\begin{theorem}[Tropical Mirror Symmetry for elliptic curves]\label{thm-refined}
Let $g>1$. For the definition of the invariants, see Definitions \ref{def-labeledtropcover} and \ref{def-labeledint}.
We have $$F_{g}(q_1,\ldots,q_{3g-3})= \sum_{\underline{a}}N_{\underline{a},g}^{\trop} q^{2\cdot \underline{a}}=\sum_{\Gamma}  I_\Gamma(q_1,\ldots,q_{3g-3}).$$

More precisely, the coefficient of the monomial $q^{2\cdot \underline{a}}$ in $I_\Gamma(q_1,\ldots,q_{3g-3})$ equals $N_{\underline{a}, \Gamma}^{\trop}$. 
\end{theorem}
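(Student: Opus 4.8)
The plan is to prove the refined statement (Theorem \ref{thm-refined}) by establishing the \emph{coefficient-wise} equality, i.e.\ by showing that for each Feynman graph $\Gamma$ and each multi-index $\underline{a} \in \NN^{3g-3}$, the coefficient of $q^{2\cdot\underline{a}}$ in $I_\Gamma(q_1,\ldots,q_{3g-3})$ equals $N^{\trop}_{\underline{a},\Gamma}$. Once this is done, summing over all $\Gamma$ and all $\underline{a}$ gives the global identity. The first step is therefore to make the integral $I_{\Gamma,\Omega}$ computable coefficient-by-coefficient. Since the integrand $\prod_k(-P_k)$ is a product of propagators $P(z_i-z_j,q_k)$, and the propagator is built from $\wp$ and $E_2$, I would first expand $P$ as an explicit $q$-power series whose coefficients are functions of $z$. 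As the excerpt advertises (Theorem \ref{thm-prop} in Section \ref{sec-prop}), one writes the propagator as a particularly clean Fourier-type series in the variable $z$. This converts the transcendental integral over the paths $\gamma_j:[0,1]\to\CC,\ t\mapsto iy_j+t$ into an algebraic problem of extracting Fourier coefficients.

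Next I would perform the coordinate change suggested in the introduction, passing from $z_j$ to multiplicative variables, and use the residue theorem to evaluate each integral $\int_{z_j\in\gamma_j}$. Because each integration path is a unit-length horizontal translate $t\mapsto iy_j+t$, integrating a Fourier mode $e^{2\pi i n z_j}$ over $\gamma_j$ simply picks out the constant term (the $n=0$ mode) up to boundary terms, so the whole integral reduces to a \emph{constant-term extraction} in the expanded propagator product. This is exactly the ``monomial hunt'' the introduction describes and which is formalized in Lemma \ref{lem-seriesconstterm}. The crucial subtlety here is the role of the ordering $\Omega$ of the starting points $iy_j$: the series expansion of each $P_k=P(z_i-z_j,q_k)$ must be taken in a region determined by whether $y_i>y_j$ or $y_i<y_j$, and the two choices give different expansions. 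This is why $I_\Gamma=\sum_\Omega I_{\Gamma,\Omega}$ sums over all $(2g-2)!$ orderings: each ordering fixes a consistent domain of expansion for every edge-propagator, and hence a well-defined power series whose coefficients can be read off.

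The heart of the proof is then the bijection promised in Theorem \ref{thm-bij} and built explicitly in Construction \ref{const-bij}. Concretely, after the residue computation each nonzero contribution to the coefficient of $q^{2\cdot\underline a}$ in $I_{\Gamma,\Omega}$ is indexed by a combinatorial datum: for each edge $q_k$ one chooses a Fourier mode (an integer, interpretable as a signed weight together with a count of $p_0$-preimages), subject to the balancing-type constraints forced by the constant-term conditions at each vertex $x_i$ and by the prescribed exponent $2a_k$. I would show that this datum is precisely the combinatorial data of a labeled tropical cover $\hat\pi:C\to E$ of type $\Gamma$ with branch type $\underline a$ at $p_0$: the chosen integers become the weights $w_k$ of the edges, the ordering $\Omega$ records the cyclic positions of the vertices on the target circle $E$, and the multiplicity bookkeeping reproduces $\prod_k w_k$. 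The map from tuples to covers is Construction \ref{const-bij}, and Lemmas \ref{lem-themap} and \ref{lem-inverse} verify that it is well-defined and has a two-sided inverse. Matching the weights against the integral's numerical contribution shows that each tropical cover is counted with multiplicity $\prod_k w_k$, exactly the weight in Definition \ref{def-labelednumbercover}.

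The main obstacle, I expect, is the careful correlation between the analytic ordering $\Omega$ and the combinatorial geometry of the cover. On the integral side, $\Omega$ dictates in which annular region each propagator is expanded, hence the \emph{sign} of each admissible Fourier mode; on the tropical side, the linear ordering of the starting points $iy_j$ must be identified with the position of the images of the vertices along the base circle $E$, which in turn determines in which direction (left or right of its image) each edge of $C$ runs, i.e.\ the local orientation entering the balancing condition. Reconciling these — showing that the balancing condition at every vertex of $C$ is equivalent to the vanishing-of-residue / constant-term condition at the corresponding $x_i$, uniformly across all $\Omega$, and that the sign conventions (the freedom $z_i-z_j$ versus $z_j-z_i$ flagged in Definition \ref{def-int}) cause no overcounting — is where the real combinatorial work lies. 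Everything else is the essentially mechanical expansion of $P$ and the residue computation, which I would relegate to Section \ref{sec-prop} and invoke here via Lemma \ref{lem-seriesconstterm}.
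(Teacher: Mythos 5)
Your proposal follows essentially the same route as the paper: expand the propagator as a Fourier-type series (Theorem \ref{thm-prop}), convert the integral into a constant-term extraction via the change of variables and the residue theorem (Lemmas \ref{lem-seriesconstterm} and \ref{lem-constterm}, with $\Omega$ fixing the expansion region for each edge), and then match the contributing monomial tuples bijectively with labeled tropical covers, identifying the constant-term condition at each $x_i$ with the balancing condition and the mode coefficients $w_k$ with edge weights so that multiplicities agree. This is precisely the paper's argument via Theorem \ref{thm-bij}, Construction \ref{const-bij}, and Lemmas \ref{lem-themap} and \ref{lem-inverse}.
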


The proof or Theorem \ref{thm-refined} follows immediatly from Theorem \ref{thm-bij}.

Note that the Tropical Mirror Symmetry Theorem naturally gives an interpretation of the Hurwitz number generating function $F_{g}(q_1,\ldots,q_{3g-3})$ in terms of a sum over Feynman graphs --- we can write it as
$$F_{g}(q_1,\ldots,q_{3g-3}) =\sum_\Gamma \left(\sum_{\underline{a}}N_{\underline{a},\Gamma}^{\trop} q^{2\cdot \underline{a}}\right),$$
and Theorem \ref{thm-refined} implies that the equality to the Feynman integral holds on the level of the summands for each graph. The same is true of course after setting $q_k=q$ for all $k$, thus going back to (unlabeled) tropical covers and (unrefined) Feynman integrals.
This is particularly interesting since all statements that hold on the level of graphs (such as the quasimodularity shown in Section \ref{sec-quasimod}) 
now become meaningful on the A-model side (i.e., for the generating function of Hurwitz numbers.)

 We now show how one can deduce Theorem \ref{thm-tropmirror} from this more refined version:
\begin{proof}[Proof of Theorem \ref{thm-tropmirror} using Theorem \ref{thm-refined}]
 For a fixed graph $\Gamma$, let $N_{d,\Gamma}^{\trop}$ be the number of (unlabeled) tropical covers of degree $d$ as in Definition \ref{def-trophurwitz}, where the combinatorial type of the source curve is $\Gamma$. As in Definition \ref{def-mult}, each cover $\pi:C\rightarrow E$ is counted with multiplicity $\frac{1}{|\Aut(\pi)|}\prod_{e}w_e$, where the product goes over all edges $e$ of $\Gamma$ and $w_e$ denotes the weight of the edge $e$.
As usual for Feynman graphs, we fix a reference labeling $x_i$ of the vertices and $q_i$ of the edges (see Definition \ref{def-int}). There exists a forgetful map $\ft$ from the set of labeled tropical covers satisfying the ramification conditions to the set of unlabeled covers by just forgetting the labels. We would like to study the cardinality of the fibers of $\ft$. Let $\pi:C\rightarrow E$ be an (unlabeled) cover such that the combinatorial type of $C$ is $\Gamma$. The automorphism group of $\Gamma$, $\Aut(\Gamma)$, acts transitively on the fiber $\ft^{-1}(\pi)$ by relabeling vertices and edges. So, to determine the cardinality of the set $\ft^{-1}(\pi)$, we think of it as the orbit under this action and obtain $|\ft^{-1}(\pi)|=\frac{|\Aut(\Gamma)|}{|\Aut(\pi)|}$, since the stabilizer of the action equals the set of automorphisms of $\pi$.
Each labeled cover in the set $\ft^{-1}(\pi)$ is counted with the same multiplicity $\prod_e w_e$, where the product goes over all edges $e$ in $\Gamma$.

Thus we obtain 

\begin{align*}&\sum_{\underline{a}|\sum a_i=d} N_{\underline{a},\Gamma}^{\trop}
= \sum_{\hat \pi:C\rightarrow E} \prod_e w_e = \sum_{\pi:C\rightarrow
E}\;\;\;\;\;\;\; \sum_{\hat\pi:C\rightarrow E|\ft(\hat\pi)=\pi} \prod_e w_e 
\\&= \sum_{\pi:C\rightarrow E} \frac{|\Aut(\Gamma)|}{|\Aut(\pi)|}\prod_e w_e  = |\Aut(\Gamma)|\cdot N_{d,\Gamma}^{\trop},\end{align*}
where the second sum goes over all labeled covers $\hat\pi:C\rightarrow E$ of degree $d$ and genus $g$ satisfying the conditions and such that the combinatorial type of $C$ is $\Gamma$, the third sum goes analogously over all (unlabeled) covers $\pi:C\rightarrow E$ and over the labeled covers in the fiber of the forgetful map, the third equality holds true because of the orbit argument we just gave and the last equality since an (unlabeled) cover is counted with multiplicity $\frac{1}{|\Aut(\pi)|}\prod_e w_e$.

We conclude 

\begin{align*}  &\sum_d N_{d,g}^{\trop} q^{2d} = \sum_d \sum_\Gamma N_{d,\Gamma}^{\trop} q^{2d}= \sum_d \sum_\Gamma \frac{1}{|\Aut(\Gamma)|} \sum_{\underline{a}|\sum a_i=d} N_{\underline{a},\Gamma}^{\trop} q^{2d}\\&
= \sum_\Gamma \frac{1}{|\Aut(\Gamma)|} \sum_d  \sum_{\underline{a}|\sum a_i=d} N_{\underline{a},\Gamma}^{\trop} q^{2d}. \end{align*}

Now we can replace $N_{\underline{a},\Gamma}^{\trop}$ by the coefficient of $q^{2\cdot\underline{a}}$ in $I_\Gamma(q_1,\ldots,q_{3g-3})$ by Theorem \ref{thm-refined}.
If we insert $q_k=q$ for all $k=1,\ldots,3g-3$ in $I_\Gamma(q_1,\ldots,q_{3g-3})$ we can conclude that the coefficient of $q^{2d}$ in $I_\Gamma(q)$ equals $\sum_{\underline{a}|\sum a_i=d}N_{\underline{a},\Gamma}^{\trop}$. Thus the above expression equals

$$\sum_\Gamma \frac{1}{|\Aut(\Gamma)|} I_\Gamma(q),$$

and Theorem \ref{thm-tropmirror} is proved.
\end{proof}

\begin{remark}\label{rem-basepoint}
 From a computational point of view, it makes sense to introduce a base point to the computations above.
In terms of labeled tropical covers, we then count covers as above which satisfy in addition the requirement that a fixed vertex, say e.g.\ $x_1$, is mapped to a fixed base point $p$. In terms of integrals, we set the variable $z_1=0$. The analogous statement to Theorem \ref{thm-refined} relating numbers of labeled tropical covers sending $x_1$ to $p$ to coefficients of integrals where we set $z_1=0$ holds true and can be proved along the same lines as the proof of Theorem \ref{thm-refined} presented here.
\end{remark}

\subsection{The propagator}\label{sec-prop}
In this subsection, we study the combinatorics of Feynman integrals. We show that the computation of a Feynman integral can be boiled down to a combinatorial hunt of monomials in a big generating function. We also express the integrals in terms of a constant coefficient of a multi-variate series, an expression that will become important in Section \ref{sec-quasimod} proving the quasimodularity of Feynman integrals.
In order to compute the integrals of Definition \ref{def-labeledint}, it is helpful to make a change of variables $x_j=e^{i\pi z_j}$ for each $j=1,\ldots,2g-2$.
Under this change of variables, each integration path $\gamma_j$ goes to (half) a circle around the origin. The integral is then nothing else but the computation of residues.
We start by giving a nicer expression of the propagator after the change of variables:
\begin{theorem}[The propagator]\label{thm-prop}

 The propagator $P(x,q)$ of Definition \ref{def-prop} with $x=e^{i\pi z}$ equals
\begin{equation}\label{PTaylor}
 P(x,q)=- \frac{x^2}{(x^2-1)^2}-\sum_{n=1}^\infty \left(\sum_{d|n}d \left(x^{2d}+ x^{-2d}\right)\right)q^{2n}.
\end{equation}

\end{theorem}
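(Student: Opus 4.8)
The plan is to reduce the claim to the classical Fourier ($q$-)expansion of the Weierstra\ss{} function and then to let the $\tfrac{1}{12}E_2(q^2)$-summand absorb the remaining constant and the superfluous terms.

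First I would recall the standard expansion of $\wp$. Writing $u=e^{2\pi i z}$ for the multiplicative coordinate on the elliptic curve and $\tilde q$ for the nome, the Weierstra\ss{} function admits the well-known expansion
$$\frac{1}{(2\pi i)^2}\wp(z)=\frac{1}{12}+\frac{u}{(1-u)^2}+\sum_{n=1}^\infty\Big(\sum_{d\mid n}d\,(u^d+u^{-d}-2)\Big)\tilde q^{\,n}.$$
This follows from the partial-fraction formula $\wp(z)=\tfrac{1}{z^2}+\sum_{\omega\neq 0}\big(\tfrac{1}{(z-\omega)^2}-\tfrac{1}{\omega^2}\big)$ for the lattice $\ZZ+\tau\ZZ$ together with the identity $\sum_{m\in\ZZ}(z-m)^{-2}=\pi^2/\sin^2(\pi z)=(2\pi i)^2\,u/(1-u)^2$, applied line by line to the vertical lattice translates; the $n=0$ term contributes the singular part $u/(1-u)^2$ together with the constant $-\tfrac{\pi^2}{3}/(2\pi i)^2=\tfrac{1}{12}$, while the remaining lines produce $\sum_{n,d\geq1}d(u^d+u^{-d}-2)\tilde q^{\,nd}$, which is reorganized by collecting the coefficient of $\tilde q^{\,N}$, namely $\sum_{d\mid N}d(u^d+u^{-d}-2)$.

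Next I would match the conventions of Definition \ref{def-prop}. Since $x=e^{i\pi z}$ we have $u=x^2$, so $u/(1-u)^2=x^2/(x^2-1)^2$; moreover the paper's variable satisfies $\tilde q=q^2$ (i.e.\ $q=e^{i\pi\tau}$), consistently with $x=e^{i\pi z}$, and $(2\pi i)^2=-4\pi^2$, hence $\tfrac{1}{4\pi^2}=-\tfrac{1}{(2\pi i)^2}$. Substituting gives
$$\frac{1}{4\pi^2}\wp(z,q)=-\frac{1}{12}-\frac{x^2}{(x^2-1)^2}-\sum_{n=1}^\infty\Big(\sum_{d\mid n}d\,(x^{2d}+x^{-2d}-2)\Big)q^{2n}.$$
Finally I would add the Eisenstein term: from $E_2(q)=1-24\sum_{d\geq1}\sigma(d)q^d$ and $\sigma(n)=\sum_{d\mid n}d$ one gets $\tfrac{1}{12}E_2(q^2)=\tfrac{1}{12}-2\sum_{n\geq1}\big(\sum_{d\mid n}d\big)q^{2n}$. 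Summing the two displays, the constants $\pm\tfrac1{12}$ cancel, and in the coefficient of $q^{2n}$ the term $+2\sum_{d\mid n}d$ arising from the $-2$ inside the $\wp$-expansion is exactly killed by the $-2\sum_{d\mid n}d$ coming from $E_2$, leaving $-\sum_{d\mid n}d\,(x^{2d}+x^{-2d})$. This is precisely \eqref{PTaylor}.

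The only genuinely delicate point is fixing the normalizations correctly: one must pin down that the nome is $q^2$ (not $q$), that the multiplicative variable is $u=x^2$, and that $\tfrac{1}{4\pi^2}\wp=-\tfrac{1}{(2\pi i)^2}\wp$ carries the right sign. Once these are fixed, the role of the $\tfrac{1}{12}E_2(q^2)$-summand in the definition of the propagator becomes transparent: it is exactly the correction needed to cancel the constant $\tfrac1{12}$ and the divisor-sum $-2$-terms in the $\wp$-expansion, so that $P$ acquires the clean shape \eqref{PTaylor}. I would therefore regard obtaining (or citing) the $\wp$-expansion as the main input, and the subsequent cancellation as routine bookkeeping.
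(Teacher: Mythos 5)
Your proof is correct, but it takes a genuinely different route from the paper's. You reduce the statement to the classical Fourier expansion of $\wp$ in the nome,
\[
\frac{1}{(2\pi i)^2}\wp(z)=\frac{1}{12}+\frac{u}{(1-u)^2}+\sum_{n\ge 1}\Big(\sum_{d\mid n}d\,\big(u^{d}+u^{-d}-2\big)\Big)\tilde q^{\,n},
\]
and then check that the $\tfrac1{12}E_2(q^2)$ term cancels the constant $\tfrac1{12}$ and the $-2\sigma(n)$ contributions; your identification of the conventions ($u=x^2$, $\tilde q=q^2$, $\tfrac{1}{4\pi^2}=-\tfrac{1}{(2\pi i)^2}$) is exactly right, and the cancellation works out as you say. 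The paper instead works coefficient-by-coefficient in $z$: it starts from the Laurent expansion $\wp(z,q)=z^{-2}+\sum_{k\ge 2}(2k-1)G_{2k}(q)z^{2k-2}$, expresses $\zeta(2k)$ via Bernoulli numbers, expands both $-\tfrac{x^2}{(x^2-1)^2}$ (by differentiating the Bernoulli generating function) and the divisor-sum series as Taylor series in $z$, and matches the coefficients of $z^{2\ell}$. The two arguments are essentially transposes of one another: you organize the identity by powers of $q$, the paper by powers of $z$. Your route is shorter and makes the structural role of the $E_2$ correction transparent, but it front-loads the work into the $q$-expansion of $\wp$, which is itself nearly equivalent to the statement being proved (so if you cannot cite it you must derive it, and your sketch of that derivation via $\sum_{m}(z-m)^{-2}=\pi^2/\sin^2(\pi z)$, while correct in outline, glosses over the rearrangement of the double sum over lattice rows). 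The paper's route needs only the Laurent expansion of $\wp$ and standard Bernoulli/zeta identities, at the price of a slightly longer computation. Either argument is acceptable.
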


\begin{proof}

The claim of the theorem follows by comparing Taylor coefficients of both sides. To be more precise (see \cite{KK07})
$$
\wp (z,q) = z^{-2} + \sum_{k=2}^{\infty}(2k-1)G_{2k}(q)z^{2k-2},
$$
where $G_{2k}(q)$ is the classical weight $2k$ Eisenstein series, normalized to have a Fourier expansion of the shape
$$
G_{2k}(q)= 2 \zeta(2k) + \frac{2(2\pi i)^{2k}}{(2k-1)!}\sum_{n=1}^{\infty}\sigma_{2k-1}(n)q^{2n},
$$
where $\zeta(s)$ denotes the Riemann zeta function and $\sigma_{\ell}(n):= \sum_{d | n}d^{\ell}$ is the $\ell$th divisor sum. Note that at even integers the $\zeta$-funtion may be written in terms of Bernoulli numbers, defined via its generating functions
\begin{equation}\label{Bergen}
 \frac{t}{e^t -1} = \sum_{m=0}^{\infty} \frac{B_m}{m!}t^m.
\end{equation}
To be more precise, we have
$$
\zeta(2k)= (-1)^{k+1}\frac{(2\pi)^{2k}}{2(2k)!}B_{2k}.
$$
We next determine the Taylor expansion of the right-hand side of (\ref{PTaylor}). Differentiating (\ref{Bergen}) gives that
$$
-\frac{e^{2\pi i z}}{ (e^{2\pi i z} - 1)^2} = \frac{1}{(2\pi z)^2} + \sum_{m=1}^{\infty} \frac{m-1}{m!} B_m (2 \pi i z)^{m-2}.
$$
Moreover, in the second term we use the series expansion of the exponential function to obtain
\begin{align*}
-\sum_{n=1}^{\infty}\sum_{d | n}d\left(x^{2d} + x^{-2d}\right)q^{2n} & = - \sum_{n=1}^{\infty}\sum_{d | n}d\sum_{\ell=0}^{\infty}\left(1 + (-1)^{\ell}\right)\frac{(2\pi i d z)^{\ell}}{\ell!}q^{2n} \\ 
& = -2 \sum_{n=1}^{\infty}\sum_{d | n}d \sum_{\ell=0}^{\infty}\frac{(2\pi i d z)^{2\ell}}{(2\ell)!}q^{2n} \\
& = -2 \sum_{n=1}^{\infty} \sum_{\ell=0}^{\infty}\frac{(2\pi i z)^{2\ell}}{(2\ell)!}\sum_{d | n}d^{2\ell +1}q^{2n} \\
& = -2 \sum_{n=1}^{\infty} \left(\sum_{\ell=0}^{\infty}\frac{(2\pi i z)^{2\ell}}{(2\ell)!}\sigma_{2\ell+1}(n) \right) q^{2n}. \\
\end{align*}
Now the claim follows by comparing coefficients of $z^{2\ell}$.

\end{proof}

Now let us go back to a fixed Feynman graph $\Gamma$ and consider the function we have to integrate.
Denote the vertices of the edge $q_k$ of $\Gamma$ by $x_{k_1}$ and $x_{k_2}$. 
Since the derivative of $\ln(x_i)$ is $\frac{1}{x_i}$, after the coordinate change we integrate the function $$P_\Gamma:=\prod_{k=1}^{3g-3} \left(-P\Big(\frac{x_{k_1}}{x_{k_2}},q_k\Big)\right) \cdot \frac{1}{i\pi x_1\cdot\ldots\cdot i\pi  x_{2g-2}}.$$

After the coordinate change, the integration paths are half-circles around the origin. Since our function is symmetric (there are only even powers of $x$), we can compute this integral as $\frac{1}{2}$ times the integral of the same function along a whole circle. Since the function has only one pole at zero (within the range of integration), we can compute the integral along the whole circle as $2i\pi$ times the residue at zero by the Residue Theorem.

It follows that the integral equals the constant coefficient of
\begin{equation}P'_\Gamma:=\prod_{k=1}^{3g-3} \left(-P\Big(\frac{x_{k_1}}{x_{k_2}},q_k\Big)\right).\label{eq-integrand}\end{equation}

Note that it follows from Theorem \ref{thm-prop} that $-P\big(\frac{x}{y},q\big)=- P\big(\frac{y}{x},q\big)$. 
This is obvious for the (Laurent-polynomial) coefficients of $q^d$ with $d>0$.
For the constant coefficient, it follows since
\begin{equation} \frac{\big(\frac{x}{y}\big)^2}{\left( \big(\frac{x}{y}\big)^2-1\right)^2} = \frac{x^2y^2}{(x^2-y^2)^2} = \frac{x^2y^2}{(y^2-x^2)^2}= \frac{\big(\frac{y}{x}\big)^2}{\left( \big(\frac{y}{x}\big)^2-1\right)^2}.\label{eq-constterm}\end{equation}
Therefore it is not important which vertex of $q_k$ we call $x_{k_1}$ and which $x_{k_2}$ (this also explains the independence of the sign $z_i-z_j$ resp.\ $z_j-z_i$ in Definition \ref{def-int}).
To compute the (in the $x_k$) constant coefficient of $P'_\Gamma$, we have to express the (in $q_k$) constant coefficient of each factor, $$\frac{\left(\frac{x_{k_1}}{x_{k_2}}\right)^2}{\left( \left(\frac{x_{k_1}}{x_{k_2}}\right)^2-1\right)^2},$$ as a series.
Depending on whether $\left|\frac{x_{k_1}}{x_{k_2}}\right|<1$ or $\left|\frac{x_{k_2}}{x_{k_1}}\right|<1$, we can use the left or the right expression of Equation (\ref{eq-constterm}) for the constant coefficient and expand the denominator as product of geometric series.
The following lemma shows how to expand the constant coefficient as a series, depending on the absolute value of the ratio of the two involved variables. This explains why different orders $\Omega$ can produce different integrals $I_{\Gamma,\Omega} (q_1,\ldots,q_{3g-3})$: the position of the integration paths determine the series expansion of the constant coefficients.

\begin{lemma}\label{lem-constterm}
 Assume $|x|<1$. Then

$$\frac{x^2}{\left( x^2-1\right)^2} =\sum_{w=1}^\infty w\cdot x^{2w}.$$
\end{lemma}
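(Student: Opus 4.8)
The plan is to reduce the identity to the elementary power-series expansion of $\frac{1}{(1-u)^2}$ via the substitution $u := x^2$. First I would observe that $(x^2-1)^2 = (1-x^2)^2$, so the left-hand side is unchanged if we replace $x^2-1$ by $1-x^2$; writing $u := x^2$ and noting that $|x| < 1$ forces $|u| < 1$, it then suffices to establish
$$\frac{u}{(1-u)^2} = \sum_{w=1}^\infty w\, u^w \qquad (|u| < 1).$$

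Next I would invoke the geometric series $\frac{1}{1-u} = \sum_{n=0}^\infty u^n$, which converges absolutely for $|u| < 1$. Since a power series may be differentiated term by term inside its disk of convergence, differentiating both sides with respect to $u$ yields $\frac{1}{(1-u)^2} = \sum_{n=1}^\infty n\, u^{n-1}$. Multiplying through by $u$ then gives $\frac{u}{(1-u)^2} = \sum_{n=1}^\infty n\, u^n$, which is exactly the claimed expansion.

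Finally I would substitute $u = x^2$ back, so that each summand $w\, u^w$ becomes $w\, x^{2w}$, recovering $\frac{x^2}{(x^2-1)^2} = \sum_{w=1}^\infty w\, x^{2w}$ as desired. There is no genuine obstacle here: the one point deserving a word of justification is the legitimacy of the term-by-term differentiation, which is guaranteed by the uniform convergence of the geometric series on compact subsets of the open unit disk $\{|u| < 1\}$.
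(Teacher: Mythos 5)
Your proof is correct. The paper offers no details beyond ``the proof follows easily after expanding the factors as geometric series'' --- i.e., multiplying out the two geometric-series factors of $\frac{1}{(1-x^2)^2}$ --- and your term-by-term differentiation of $\sum_{n\ge 0}u^n$ followed by multiplication by $u$ is the standard equivalent of that elementary computation, so the approach is essentially the same.
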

The proof follows easily after expanding the factors as geometric series.

The discussion of this subsection can be summed up as follows:

\begin{lemma}\label{lem-seriesconstterm}

Fix a Feynman graph $\Gamma$ and an order $\Omega$ as in Definition \ref{def-int}, and a tuple $(a_1,\ldots,a_{3g-3})$ as in Definition \ref{def-labelednumbercover}.
We express the coefficient of $q^{2\cdot\underline{a}}$ in $I_{\Gamma,\Omega}(q_1,\ldots,q_{3g-3})$ of Definition \ref{def-labeledint}.
Assume $k$ is such that the entry $a_k=0$, and assume the edge $q_k$ connects the two vertices $x_{k_1}$ and $x_{k_2}$. Choose the notation of the two vertices $x_{k_1}$ and $x_{k_2}$ such that the chosen order $\Omega$ implies $\left|\frac{x_{k_1}}{x_{k_2}}\right|<1$ for the starting points on the integration paths.
Then the coefficient of $q^{2\cdot\underline{a}}$ in $I_{\Gamma,\Omega}(q_1,\ldots,q_{3g-3})$ equals the constant term of the series 
\begin{equation}\prod_{k|a_k=0} \left( \sum_{w_k=1}^\infty w_k\cdot\left(\frac{x_{k_1}}{x_{k_2}}\right)^{2w_k}\right)
\cdot \prod_{k|a_k\neq 0} \left(\sum_{w_k|a_k}w_k \left(\left(\frac{x_{k_1}}{x_{k_2}}\right)^{2w_k}+\left(\frac{x_{k_2}}{x_{k_1}}\right)^{2w_k}\right)\right).
\label{eq-series}\end{equation}
 
\end{lemma}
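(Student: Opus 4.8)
The plan is to read the claim directly off the two facts already established in this subsection: the reduction of $I_{\Gamma,\Omega}$ to a constant term, and the explicit propagator expansion of Theorem \ref{thm-prop}. We have already argued that the coefficient of $q^{2\cdot\underline{a}}$ in $I_{\Gamma,\Omega}(q_1,\ldots,q_{3g-3})$ equals the (in the $x_j$) constant term of the coefficient of $q^{2\cdot\underline{a}}$ in
$$P'_\Gamma=\prod_{k=1}^{3g-3}\left(-P\Big(\tfrac{x_{k_1}}{x_{k_2}},q_k\Big)\right).$$
The first observation I would use is that the $k$-th factor involves, among the $q$-variables, only $q_k$. Hence extracting the coefficient of $q^{2\cdot\underline{a}}=q_1^{2a_1}\cdots q_{3g-3}^{2a_{3g-3}}$ factors as a product over $k$ of the coefficient of $q_k^{2a_k}$ in $-P(x_{k_1}/x_{k_2},q_k)$, and I would isolate this per-edge coefficient first.

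Next I would substitute the expansion of Theorem \ref{thm-prop}. For an edge with $a_k\neq 0$ the coefficient of $q_k^{2a_k}$ is already the Laurent polynomial $\sum_{w_k\mid a_k}w_k\big((x_{k_1}/x_{k_2})^{2w_k}+(x_{k_2}/x_{k_1})^{2w_k}\big)$, which is exactly the factor in the second product of (\ref{eq-series}); note this is symmetric under interchanging $x_{k_1}$ and $x_{k_2}$, so the orientation of the edge plays no role here. For an edge with $a_k=0$ the relevant coefficient is the $q_k$-constant term $\tfrac{(x_{k_1}/x_{k_2})^2}{((x_{k_1}/x_{k_2})^2-1)^2}$, which is a genuine rational function and must be expanded into a series before an $x$-constant term can be taken.

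The only point requiring care is the direction of this expansion, which is where the dependence on the order $\Omega$ enters. The plan is to note that under $x_j=e^{i\pi z_j}$ the starting point $iy_j$ of $\gamma_j$ gives $|x_j|=e^{-\pi y_j}$, so the order $\Omega$ of the real numbers $y_j$ translates into a total order of the moduli $|x_j|$; for each edge we may therefore name its endpoints so that $|x_{k_1}/x_{k_2}|<1$, as in the statement. With this orientation Lemma \ref{lem-constterm} applies and yields $\tfrac{(x_{k_1}/x_{k_2})^2}{((x_{k_1}/x_{k_2})^2-1)^2}=\sum_{w_k=1}^\infty w_k(x_{k_1}/x_{k_2})^{2w_k}$, the factor in the first product of (\ref{eq-series}). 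Multiplying the per-edge contributions reproduces the series (\ref{eq-series}), and taking its constant term finishes the proof. I expect no serious obstacle: the argument is essentially bookkeeping, and the only substantive input — the compatibility between the geometry of the integration paths and the chosen series expansion — is precisely the translation $|x_j|=e^{-\pi y_j}$ combined with Lemma \ref{lem-constterm}.
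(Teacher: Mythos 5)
Your proposal is correct and follows essentially the same route as the paper: the reduction of $I_{\Gamma,\Omega}$ to the $x$-constant term of $P'_\Gamma$ via the substitution $x_j=e^{i\pi z_j}$ and the residue theorem, the per-edge extraction of the $q_k^{2a_k}$-coefficient from the propagator expansion of Theorem \ref{thm-prop}, and the use of Lemma \ref{lem-constterm} with the orientation dictated by $|x_j|=e^{-\pi y_j}$ for the $a_k=0$ factors. The paper states the lemma as a summary of exactly this discussion, so nothing further is needed.
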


The discussion above will also be important in Section \ref{sec-quasimod} for proving the quasimodularity of Feynman integrals. We sum it up in a way suitable for this purpose. We insert $q_k=q$ for each $k$ coming back to the integrals of Definition \ref{def-int}. 
For a Taylor series
\[
F\left(x_1, \dots, x_n\right)=\sum\limits_{1\leq j\leq n\atop{a_j\geq 0}}\alpha\left(a_1, \dots, a_n\right) x_1^{a_1}\cdot\dots\cdot x_n^{a_n}
\]
we denote
\[
\text{coeff}_{\left[x_1^{a_1},\dots, x_n^{a_n}\right]}(F)=\alpha\left(a_1, \dots, a_n\right).
\]
Moreover, for a fixed Feynman graph $\Gamma$ and an order $\Omega$ as in Definition \ref{def-int}, let 
\[
P_{\Gamma, \Omega}:=\prod_{k=1}^{3g-3}\left(-P\left(\frac{x_{k_1}}{x_{k_2}}, q\right)\right),
\]
where $P$ as usual denotes the propagator of Definition \ref{def-prop} resp. Theorem \ref{thm-prop}.
\begin{lemma}\label{lem-Igammaconst}
 For $\Gamma$ and $\Omega$ as in Definition \ref{def-int}, we have
\[
I_{\Gamma, \Omega}(q)=\text{coeff}_{\left[x_1^0,\dots, x_{2g-2}^0\right]}\left(P_{\Gamma, \Omega}\right).
\]
\end{lemma}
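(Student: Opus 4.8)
The plan is to recognize that Lemma~\ref{lem-Igammaconst} is simply the specialization $q_k=q$ of the chain of identities already assembled between Equations~(\ref{eq-integrand}) and~(\ref{eq-constterm}), so I would reconstruct that chain cleanly, carry out the bookkeeping of scalar prefactors one variable at a time, and only at the end set $q_k=q$. First I would perform the substitution $x_j=e^{i\pi z_j}$ for each vertex coordinate $z_j$, under which $dz_j=\frac{1}{i\pi x_j}\,dx_j$ and each path $\gamma_j\colon t\mapsto iy_j+t$ is carried to the upper half of the circle of radius $r_j=e^{-\pi y_j}$ about the origin, traversed counterclockwise. The integrand $\prod_k(-P_k)$ becomes $P'_\Gamma=\prod_k\bigl(-P(x_{k_1}/x_{k_2},q_k)\bigr)$, and the product of the $2g-2$ differentials contributes the factor $\frac{1}{(i\pi)^{2g-2}x_1\cdots x_{2g-2}}$; together these reproduce exactly the function $P_\Gamma$ introduced above.

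Next, I would treat the variables one at a time. By Theorem~\ref{thm-prop} the propagator $P(x,q)$ contains only even powers of $x$, so $P'_\Gamma$ is even in each $x_j$; writing $x_j=r_je^{i\theta}$ shows that the integrand is invariant under $\theta\mapsto\theta+\pi$, whence the integral over $\theta\in[0,\pi]$ is half the integral over $\theta\in[0,2\pi]$. I would then expand each factor of $P'_\Gamma$ as a convergent Laurent series in the $x_j$ (using Lemma~\ref{lem-constterm} for the constant-in-$q$ terms, with the choice of expansion $|x_{k_1}/x_{k_2}|<1$ or its reciprocal dictated by the order $\Omega$ of the radii $r_j$) and integrate over the full circle term by term: every monomial $x_j^{m}$ with $m\neq-1$ integrates to zero, so the circle integral of $P'_\Gamma/x_j$ in $x_j$ equals $2\pi i$ times the coefficient of $x_j^{-1}$ in $P'_\Gamma/x_j$, i.e.\ $2\pi i$ times the coefficient of $x_j^0$ in $P'_\Gamma$. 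The three scalar factors attached to $x_j$, namely $\frac{1}{i\pi}$ from the differential, $\frac12$ from the halving, and $2\pi i$ from the residue step, multiply to $1$ and thus cancel exactly. Iterating over all $2g-2$ vertex variables shows that $I_{\Gamma,\Omega}(q_1,\ldots,q_{3g-3})$ equals the constant coefficient of $P'_\Gamma$.

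Finally, setting $q_k=q$ for all $k$ identifies $P'_\Gamma$ with $P_{\Gamma,\Omega}$ by definition, yielding the asserted equality $I_{\Gamma,\Omega}(q)=\text{coeff}_{[x_1^0,\dots,x_{2g-2}^0]}(P_{\Gamma,\Omega})$. The step I expect to demand the most care is making the term-by-term integration rigorous: one must verify that, for the radii $r_j$ prescribed by $\Omega$, every factor $-P(x_{k_1}/x_{k_2},q_k)$ really admits a Laurent expansion converging on the relevant circle (the poles of the constant-in-$q$ term sit at $x_{k_1}^2=x_{k_2}^2$, and $\Omega$ is precisely what separates the radii so that the chosen geometric-series expansion converges there), so that the formal constant-term extraction agrees with the genuine contour integral. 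Here the symmetry identity~(\ref{eq-constterm}) is what guarantees that the labeling convention for the two endpoints of each edge $q_k$, i.e.\ which one is called $x_{k_1}$, does not affect the constant coefficient, so that the whole extraction is well defined.
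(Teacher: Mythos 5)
Your proposal is correct and follows essentially the same route as the paper: the paper's proof of Lemma \ref{lem-Igammaconst} is just the one-line remark that it follows from Equation (\ref{eq-integrand}), whose derivation in Section \ref{sec-prop} is exactly the chain you reconstruct (substitution $x_j=e^{i\pi z_j}$, halving by evenness, residue extraction, cancellation of the factors $\tfrac{1}{i\pi}\cdot\tfrac12\cdot 2\pi i$, then $q_k=q$). Your added attention to the convergence of the Laurent expansions on the circles prescribed by $\Omega$ and to the well-definedness via (\ref{eq-constterm}) only makes the argument more careful than the paper's.
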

The claim follows immediatly from Equation (\ref{eq-integrand}).

\subsection{The bijection}\label{subsec-bij}
For a fixed Feynman graph $\Gamma$ and tuple $(a_1,\ldots,a_{3g-3})$, we are now ready to directly relate nonzero contributions to the constant term of the series given in (\ref{eq-series}) for each order $\Omega$ to tropical covers contributing to $N_{\underline{a},\Gamma}^{\trop}$, thus proving Theorem \ref{thm-refined}.

We express the constant term as a sum over products containing one term of each factor of the series in (\ref{eq-series}):

\begin{definition}\label{def-tuples}
Fix $\Gamma$, $\Omega$ and $(a_1,\ldots,a_{3g-3})$ as in Definition \ref{def-int} resp. \ref{def-labelednumbercover}.
Consider a tuple of powers $a_k$ and terms of the series in (\ref{eq-series}) $$\big((a_k,T_k)\big)_{k=1,\ldots,3g-3}= \Bigg(\Big(a_k, w_k\cdot\Big(\frac{x_{k_i}}{x_{k_j}}\Big)^{2w_k}\Big)\Bigg)_{k=1,\ldots,3g-3},$$ where $i=1$ and $j=2$ if $a_k=0$, and $\{i,j\}=\{1,2\}$ otherwise.
We require the product of the terms, $\prod_{k=1}^{3g-3} T_k$, to be constant in each $x_i$, $i=1,\ldots,2g-2$.

We denote the set of all such tuples by $\mathcal{T}_{\underline{a},\Gamma,\Omega}$. 
\end{definition}
Obviously, each tuple yields a summand of the constant term of the series in (\ref{eq-series}) (and thus, by Lemma \ref{lem-seriesconstterm}, a contribution to the $q^{2\cdot \underline{a}}$-coefficient of $I_{\Gamma,\Omega}(q_1,\ldots,q_{3g-3})$), and vice versa, each summand arises from a tuple in $\mathcal{T}_{\underline{a},\Gamma,\Omega}$.
Note that the contribution of a tuple equals \begin{equation}\prod_{k=1}^{3g-3} T_k= \prod_{k=1}^{3g-3}w_k\label{eq-contribution}.\end{equation}

\begin{definition}\label{def-tropcoverorder}
 Let $\hat\pi:C\rightarrow E$ be a labeled tropical cover as in Definition \ref{def-labeledtropcover} contributing to $N_{\underline{a},\Gamma}^{\trop}$.
We cut $E$ at the base point $p_0$ and flatten it to an interval following clockwise orientation. We define an order of the vertices $x_i$ of $C$ given by the natural order of their image points on the interval.
For a given order $\Omega$ as in Definition \ref{def-int}, we let $N_{\underline{a},\Gamma,\Omega}^{\trop}$ be the weighted number of labeled tropical covers as in \ref{def-labeledtropcover} (i.e., of degree $\sum_{i=1}^{3g-3}a_i$, where the source curve has combinatorial type $\Gamma$, having their branch points at the prescribed positions and satisfying
$ \#(\hat\pi^{-1}(p_0)\cap q_i)\cdot w_i=a_i$ for all $i=1,\ldots,3g-3$, where $w_i$ denotes the weight of the edge $q_i$), and in addition satisfying that the above order equals $\Omega$. As usual, each cover is counted with multiplicity $\prod_{k=1}^{3g-3}w_k$.
\end{definition}
Note that obviously we have $\sum_\Omega N_{\underline{a},\Gamma,\Omega}^{\trop}= N_{\underline{a},\Gamma}^{\trop}$, where the sum goes over all $(2g-2)!$ orders $\Omega$ of the vertices.

\begin{example}
The cover in Figure \ref{fig-labelledCover} cut at $p_0$ and
flattened to an interval is depicted in Figure \ref{fig-vertexOrder}. Its vertex
ordering $\Omega$ is given by $x_1<x_2<x_3<x_4$.

\begin{figure}
 \begin{center}
  \input{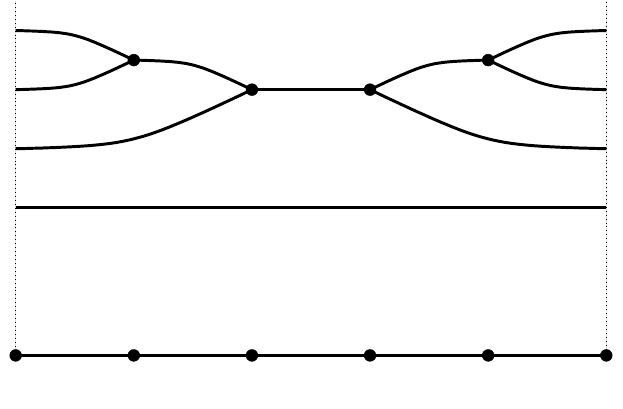_t}
 \end{center}
\caption{The cover of Figure \ref{fig-labelledCover} cut at
$p_0$.}\label{fig-vertexOrder}
\end{figure}
 \end{example}
\begin{example}
 Go back to Example \ref{exmp-N_aContribution} where we determined $ N_{\underline{a},\Gamma}^{\trop}$ for the Feynman graph $\Gamma$ in Figure \ref{fig-raupe} and $\underline{a}=(0,2,1,0,0,1)$.
Note that there are two orders $\Omega$ that yield nonzero contributions, namely $x_1<x_3<x_4<x_2$ (call it $\Omega_1$) and $x_2<x_4<x_3<x_1$ (call it $\Omega_2$). 
In Figure \ref{fig-N_aContribution}, the covers with order $\Omega_1$ appear in the left column, the covers with $\Omega_2$ in the right column.
For both orders, we have $ N_{\underline{a},\Gamma,\Omega_i}^{\trop}=128$, $i=1,2$. Altogether, we have  $ N_{\underline{a},\Gamma}^{\trop}= N_{\underline{a},\Gamma,\Omega_1}^{\trop}+N_{\underline{a},\Gamma,\Omega_2}^{\trop}=128+128=256$.
\end{example}

\begin{theorem}\label{thm-bij}
Fix a Feynman graph $\Gamma$, an order $\Omega$ and a tuple $(a_1,\ldots,a_{3g-3})$ as in Definition \ref{def-int} resp. \ref{def-labelednumbercover}.

There is a bijection between the set of labeled tropical covers contributing to $N_{\underline{a},\Gamma,\Omega}^{\trop}$ and the set $\mathcal{T}_{\underline{a},\Gamma,\Omega}$ of tuples contributing to the $q^{2\cdot \underline{a}}$-coefficient of $I_{\Gamma,\Omega}(q_1,\ldots,q_{3g-3})$ (see Definition \ref{def-tropcoverorder} and \ref{def-tuples}).

The bijection identifies the coefficients $w_k$ of the terms $T_k$ of a tuple with the weights of the edges of the corresponding labeled tropical cover. 
In particular, the contribution of a tuple to the coefficient of $q^{2\cdot \underline{a}}$ in $I_{\Gamma,\Omega}(q_1,\ldots,q_{3g-3})$ equals the multiplicity of the corresponding labeled tropical cover, with which it contributes to $N_{\underline{a},\Gamma,\Omega}^{\trop}$.
\end{theorem}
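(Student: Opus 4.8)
The plan is to realize both sides as the same piece of combinatorial data --- an assignment of a positive integer weight and an orientation to each edge of $\Gamma$, subject to a single balancing constraint --- and then to make the identification explicit via a construction (Construction \ref{const-bij}) together with two lemmas (Lemmas \ref{lem-themap} and \ref{lem-inverse}) verifying well-definedness and bijectivity.

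First I would unwind the tuple side. By Lemma \ref{lem-seriesconstterm} and Definition \ref{def-tuples}, an element of $\mathcal{T}_{\underline{a},\Gamma,\Omega}$ amounts to choosing, for each edge $q_k$, a weight $w_k$ and a term $T_k=w_k\big(\frac{x_{k_i}}{x_{k_j}}\big)^{2w_k}$: for $a_k=0$ the weight $w_k$ is arbitrary and the pair $(i,j)=(1,2)$ is forced by $\Omega$, while for $a_k\neq 0$ one requires $w_k\mid a_k$ and may freely choose either orientation $\{i,j\}=\{1,2\}$. Reading off the exponent of $x_v$ in $\prod_k T_k$, the requirement that this product be constant in every $x_v$ translates into $\sum_{k:\,v=k_i}w_k=\sum_{k:\,v=k_j}w_k$ at each of the $2g-2$ vertices $v$; that is, if we regard $T_k$ as orienting $q_k$ towards its numerator vertex $x_{k_i}$, the weights form a \emph{flow} that is conserved at every vertex.

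Next I would set up the construction producing a cover from a tuple. Cutting $E$ at $p_0$ and flattening it to an interval, the order $\Omega$ prescribes the positions of the vertex images; I then realize each edge $q_k$ as the monotone path of slope $w_k$ from the image of $x_{k_j}$ to the image of $x_{k_i}$ that crosses $p_0$ exactly $a_k/w_k$ times (and zero times when $a_k=0$). The orientation encoded by $T_k$ fixes the direction of this path, and the divisibility $w_k\mid a_k$ guarantees an integer number of crossings, so $\#(\hat\pi^{-1}(p_0)\cap q_k)\cdot w_k=a_k$ holds by construction, and summing over $k$ gives degree $\sum_k a_k$. The crucial point (Lemma \ref{lem-themap}) is that the resulting map is a genuine tropical cover: because each edge has monotone image, near its tail it occupies the right ray and near its head the left ray in the sense of Definition \ref{def-tropcover}, so the balancing condition at $v$ reads exactly $\sum_{\text{heads at }v}w_k=\sum_{\text{tails at }v}w_k$, which is precisely the flow-conservation condition extracted above. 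Thus the constant-coefficient constraint and the tropical balancing condition are literally the same system of equations.

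Finally I would check bijectivity (Lemma \ref{lem-inverse}) by describing the inverse: from a labeled cover contributing to $N_{\underline{a},\Gamma,\Omega}^{\trop}$ one reads off each edge weight $w_k$ and orients $q_k$ in the direction of increasing image; the number of $p_0$-preimages on $q_k$ then determines $a_k/w_k$, recovering the correct term $T_k$, while for $a_k=0$ the orientation is automatically the one forced by $\Omega$. Verifying that this inverts the construction reduces to the observation that, given the endpoint positions, the weight, the orientation, and the crossing number, the winding --- hence the edge and its length --- is uniquely determined. Since the weight $w_k$ appearing in $T_k$ is by construction the weight of $q_k$, the contribution $\prod_k w_k$ of the tuple (Equation (\ref{eq-contribution})) equals the multiplicity $\prod_k w_k$ with which the corresponding cover is counted (Definition \ref{def-mult}, a labeled cover having no nontrivial automorphisms), which gives the last assertion of the theorem. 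The main obstacle I anticipate is the careful bookkeeping of windings, orientations, and the left/right ray assignment at each vertex --- in particular, making precise that the free orientation choice when $a_k\neq 0$ and the forced choice when $a_k=0$ correspond exactly to the two series expansions dictated by $\Omega$ in Lemma \ref{lem-seriesconstterm}, so that neither spurious nor missing covers arise.
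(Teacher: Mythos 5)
Your proposal follows essentially the same route as the paper: you translate the constant-coefficient condition on $\prod_k T_k$ into a flow-conservation (balancing) condition at each vertex, build the cover by placing the vertices according to $\Omega$ on the cut curve and drawing each edge $q_k$ with weight $w_k$, orientation given by $T_k$, and $a_k/w_k$ crossings of $p_0$, and invert by reading off weights, orientations, and crossing numbers --- exactly the content of Construction \ref{const-bij} and Lemmas \ref{lem-themap} and \ref{lem-inverse}. The identification of the contribution $\prod_k w_k$ with the cover's multiplicity is also handled as in the paper, so the proposal is correct.
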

Note that it follows immediatly from Theorem \ref{thm-bij} that $N_{\underline{a},\Gamma,\Omega}^{\trop}$ equals the coefficient of $q^{2\cdot \underline{a}}$ in $I_{\Gamma,\Omega}(q_1,\ldots,q_{3g-3})$, hence 
the coefficient of $q^{2\cdot \underline{a}}$ in $I_\Gamma(q_1,\ldots,q_{3g-3})$ equals $N_{\underline{a}, \Gamma}^{\trop}$ and Theorem \ref{thm-refined} is proved.

To prove Theorem \ref{thm-bij}, we set up the map sending a tuple in $\mathcal{T}_{\underline{a},\Gamma,\Omega}$ to a tropical cover in Construction \ref{const-bij}. The theorem then follows from Lemma \ref{lem-themap} stating that this construction indeed yields a map as required and Lemma \ref{lem-inverse} stating that it has a natural inverse and therefore is a bijection.

As usual, let $\Gamma$, $\Omega$ and $\underline{a}$ be fixed as in Definition \ref{def-int} resp.\ \ref{def-labelednumbercover}.
\begin{construction}\label{const-bij}
Draw an interval from $p_0$ to $p_0'$ that can later be glued to $E$ by identifying $p_0$ and $p_0'$, and a rectangular box above in which we can step by step draw a cover of $E$ following the construction (see Figure \ref{fig-box}). The vertical sides of the box are called $L$ and $L'$ and represent points which (if they belong to the cover after the construction) are pairwise identified and mapped to the base point.

\begin{figure}
\begin{center}
 \input{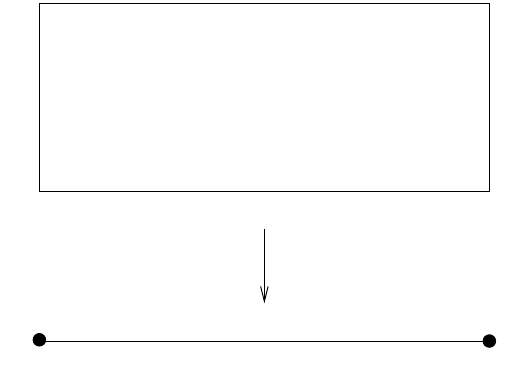_t}
\end{center}
\caption{Preparation to construct a tropical cover from a tuple.}
\label{fig-box}
\end{figure}

Given a tuple $$\big((a_k,T_k)\big)_{k=1,\ldots,3g-3}= \Bigg(\Big(a_k, w_k\cdot\Big(\frac{x_{k_i}}{x_{k_j}}\Big)^{2w_k}\Big)\Bigg)_{k=1,\ldots,3g-3}$$ of $\mathcal{T}_{\underline{a},\Gamma,\Omega}$,
\begin{itemize}
 \item draw dots labeled $x_1,\ldots,x_{3g-3}$ into the box, from left to right (and slightly downwards, to keep some space to continue the picture), as determined by the order $\Omega$, one dot above each point condition $p_i\in E$ where we fix the branch points;
\item for the term $T_k= w_k\cdot\Big(\frac{x_{k_i}}{x_{k_j}}\Big)^{2w_k}$ draw an edge leaving vertex $x_{k_i}$ to the right and entering vertex $x_{k_j}$ from the left --- if $a_k=0$, let this edge be a straight line connecting these two vertices, if $a_k\neq 0$ let it first leave the box at $L'$ and enter again at $L$, altogether $\frac{a_k}{w_k}$ times, before it enters $x_{k_j}$;
\item give the edges drawn in the previous item weight $w_k$. As always, the lengths of the edges are then determined by the differences of the image points of the $x_i$ and the weights.
\end{itemize}
Glue the corresponding points on $L$ and $L'$ to obtain a cover of $E$.
\end{construction}

\begin{example}\label{ex-const}

Let $\Gamma$ be the Feynman graph of Figure \ref{fig-raupe}
and let $\underline{a}=(0,2,2,0,1,0)$. Moreover, choose the ordering
$x_1<x_3<x_4<x_2$ and pick the terms $T_1=\left(\frac{x_1}{x_3}\right)^2$, 
$T_2=2\cdot\left(\frac{x_2}{x_1}\right)^{2\cdot 2}$, $T_3=\left(\frac{x_1}{x_2}\right)^2$,
$T_4=\left(\frac{x_4}{x_2}\right)^2$, $T_5=\left(\frac{x_4}{x_3}\right)^2$ and
$T_6=2\cdot\left(\frac{x_3}{x_4}\right)^{2\cdot 2}$ from the series in (\ref{eq-series}). When applying
Construction \ref{const-bij} we obtain the picture shown in Figure \ref{fig-bij} before gluing.
\begin{figure}
 \begin{center}
  \input{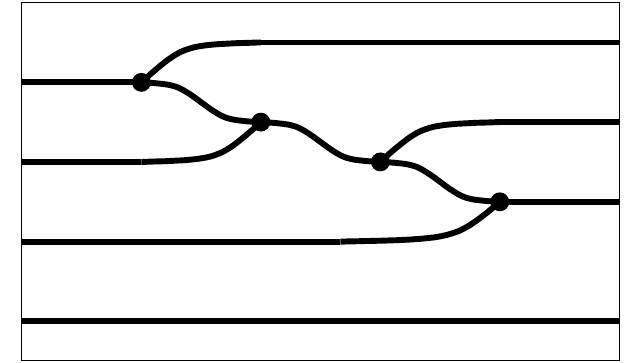_t}
 \end{center}
\caption{Applying Construction \ref{const-bij} in Example \ref{ex-const}.}\label{fig-bij}
\end{figure}
\end{example}

\begin{lemma}\label{lem-themap}
 Construction \ref{const-bij} defines a map from $\mathcal{T}_{\underline{a},\Gamma,\Omega}$ to the set of tropical covers contributing to $N_{\underline{a},\Gamma,\Omega}^{\trop}$ (see Definition \ref{def-tropcoverorder} and \ref{def-tuples}).
\end{lemma}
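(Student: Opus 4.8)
The plan is to verify, directly from the recipe of Construction \ref{const-bij}, each of the conditions defining a labeled tropical cover contributing to $N_{\underline{a},\Gamma,\Omega}^{\trop}$, isolating the balancing condition as the one substantive point. I would keep in mind that we only need the abstract metric graph together with the map $\pi$, so possible crossings in the planar picture are irrelevant.

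First I would dispatch the structural properties. By construction the produced vertices and edges are exactly those of $\Gamma$, carrying the reference labels $x_1,\ldots,x_{2g-2}$ and $q_1,\ldots,q_{3g-3}$; hence $C$ is connected of genus $g$ and trivalent with labeled combinatorial type $\Gamma$. The map $\pi$ is continuous, since an edge leaving at $L'$ re-enters at the point of $L$ with which it is identified; it is integer affine of slope $w_k$ on the edge $q_k$; and it is non-constant as every $w_k\geq 1$. The edge lengths are forced by the weights and the positions of the images of the $x_i$, as noted after the construction. The dots sit above the prescribed branch points $p_1,\ldots,p_{2g-2}$, so the branch points of $\pi$ lie where required, and since the dots are drawn from left to right according to $\Omega$, the vertex order of Definition \ref{def-tropcoverorder} induced by cutting at $p_0$ is exactly $\Omega$.

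The heart of the argument is the balancing condition, and here I would match the exponent bookkeeping of the tuple to the local picture at a vertex. A term $T_k=w_k\bigl(\frac{x_{k_i}}{x_{k_j}}\bigr)^{2w_k}$ contributes $+2w_k$ to the exponent of the numerator variable $x_{k_i}$ and $-2w_k$ to that of the denominator variable $x_{k_j}$. In Construction \ref{const-bij} this same edge $q_k$ leaves the numerator vertex $x_{k_i}$ to the right, hence maps to the right-hand ray at $\pi(x_{k_i})$, and enters the denominator vertex $x_{k_j}$ from the left, hence maps to the left-hand ray at $\pi(x_{k_j})$; this holds whether or not the edge wraps around $L,L'$ in between. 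Summing over the three edges incident to a fixed vertex $x_m$, the total weight mapping to the right-hand ray is $\sum_{k:\,x_{k_i}=x_m}w_k$ and the total weight mapping to the left-hand ray is $\sum_{k:\,x_{k_j}=x_m}w_k$. The requirement of Definition \ref{def-tuples} that $\prod_k T_k$ be constant in $x_m$ says precisely that the exponent of $x_m$ vanishes, i.e.\ that these two sums agree, which is exactly the balancing condition at $x_m$. In particular the constancy condition rules out a vertex with all three edges on one side, so $\pi$ is genuinely balanced everywhere.

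Finally I would read off the degree and ramification data from the wrapping count. As $p_0$ is distinct from the branch points it is a regular value, and its fiber is recorded by the crossings of the identified vertical sides $L\cong L'$: an edge $q_k$ with $a_k\neq 0$ wraps around $a_k/w_k$ times and thus meets the fiber in $a_k/w_k$ points, while an edge with $a_k=0$ is local and meets it in none, so in every case $\#(\hat\pi^{-1}(p_0)\cap q_k)\cdot w_k=a_k$; summing the weighted fiber contributions gives degree $\sum_k a_k$. Hence the output contributes to $N_{\underline{a},\Gamma,\Omega}^{\trop}$, and since each edge carries weight $w_k$ it is counted there with multiplicity $\prod_k w_k$, matching the contribution (\ref{eq-contribution}) of the tuple. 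The only genuinely delicate point is the orientation convention in the $a_k=0$ case: I would check that the choice $i=1,j=2$ together with the normalization $\bigl|\frac{x_{k_1}}{x_{k_2}}\bigr|<1$ of Lemma \ref{lem-seriesconstterm} positions the numerator vertex so that the local segment runs rightward without wrapping, which is what makes the $a_k=0$ edge well-defined as a straight connector.
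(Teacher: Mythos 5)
Your proof is correct and follows essentially the same route as the paper's: the combinatorial type and vertex order are read off directly from the construction, the balancing condition at each vertex is identified with the vanishing of that vertex's exponent in $\prod_k T_k$ (you phrase this uniformly via numerator/denominator bookkeeping where the paper fixes a notational instance, but it is the same computation), and the branch type and multiplicity are read off from the wrapping count. Your explicit remarks on continuity and on the orientation convention for $a_k=0$ edges only make explicit what the paper leaves implicit, and are consistent with the convention fixed in Lemma \ref{lem-seriesconstterm} and used in Lemma \ref{lem-inverse}.
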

\begin{proof}
 Since the integrand of Definition \ref{def-labeledint} is set up such that we have a term containing a power of $\frac{x_i}{x_j}$ in the tuples in $\mathcal{T}_{\underline{a},\Gamma,\Omega}$ if and only if there is an edge $q_k$ connecting $x_i$ and $x_j$, it is clear that we produce a cover whose source curve has combinatorial type equal to the labeled Feynman graph $\Gamma$ (see also Equation (\ref{eq-integrand})).
It is also obvious that the vertices are mapped to the interval respecting the order $\Omega$.
To see that it is a tropical cover at all, we have to verify the balancing condition at each vertex $x_i$. This follows from the fact that we require the product of all terms to be constant in $x_i$: since $\Gamma$ is trivalent, we have three edges adjacent to $x_i$, to fix notation call them (without restriction) $q_1$, $q_2$, and $q_3$. Assume (also without restriction) that the other vertex of $q_j$, $j=1,\ldots,3$ is $x_j$.
The only three terms in the product $\prod_{k=1}^{3g-3}T_k$  involving $x_i$ are then
$$ w_1\cdot\Big(\frac{x_{1}}{x_{i}}\Big)^{2w_1}, w_2\cdot\Big(\frac{x_{i}}{x_{2}}\Big)^{2w_2} \mbox{, and } w_3\cdot\Big(\frac{x_{i}}{x_{3}}\Big)^{2w_3}, $$
where we picked an arbitrary choice between a quotient such as $\frac{x_1}{x_i}$ and its inverse in each term for now. This choice is again made without restriction, just to fix the notation in the terms $T_k$ of the given tuple. (Of course, in general, if some of the $a_j$,$j=1,\ldots,3$ are zero, the choice has to respect the order $\Omega$.)
In our fixed but arbitrary choice the requirement that the product is constant in $x_i$ translates to the equation
$$ -2w_1+2w_2+2w_3=0.$$
The construction implies that here, the edge $q_1$ enters $x_i$ from the left with weight $w_1$ while $q_2$ and $q_3$ leave the vertex $x_i$ to the right with weight $w_2$ and $w_3$, hence the balancing condition is fulfilled.
Since the direction of the edges we draw in the construction reflects the fact that the corresponding two vertices show up in the numerator resp.\ denominator of the quotient, it is obvious that the instance for which we fixed a notation generalizes to any situation; and the balancing condition at $x_i$ is always equivalent to the requirement that the product is constant in $x_i$.
It is clear from the construction that the labeled tropical cover we have built has its branch points at the required positions. If $a_k=0$, the construction implies that  $ \hat\pi^{-1}(p_0)\cap q_k=\emptyset$ and thus 
 $ \#(\hat\pi^{-1}(p_0)\cap q_k)\cdot w_k=0=a_k$ as required.
If $a_k\neq 0$, we draw $\frac{a_k}{w_k}$ points on $L$ resp.\ $L'$ that are identified to give $\frac{a_k}{w_k}$ preimages of $p_0$ in $q_k$, thus 
$ \#(\hat\pi^{-1}(p_0)\cap q_k)\cdot w_k=a_k$ holds in general. In particular, the degree of the cover is $\sum_{k=0}^{3g-3}a_k$.

Obviously, the map identifies the coefficient $w_k$ of a term $T_k$ with the weight of the edge $q_k$, therefore the contribution of a tuple to the $q^{2\cdot \underline{a}}$-coefficient of the integral $I_{\Gamma,\Omega}(q_1,\ldots,q_{3g-3})$ (given by Equation (\ref{eq-contribution})) equals the contribution of the corresponding tropical cover to $N_{\underline{a},\Gamma,\Omega}^{\trop}$ by Definition \ref{def-tropcoverorder}.
The statement follows.
\end{proof}

\begin{lemma}\label{lem-inverse}
 The map of Lemma \ref{lem-themap} has a natural inverse and is a bijection.
\end{lemma}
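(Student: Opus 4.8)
The plan is to exhibit the inverse of the map from Construction \ref{const-bij} explicitly and then to verify the two routine facts that make the two maps mutually inverse: that the inverse lands in $\mathcal{T}_{\underline{a},\Gamma,\Omega}$, and that composing it either way with Construction \ref{const-bij} yields the identity. To define the inverse, take a labeled tropical cover $\hat\pi:C\rightarrow E$ contributing to $N_{\underline{a},\Gamma,\Omega}^{\trop}$, cut $E$ at the base point $p_0$ and flatten it to an interval as in Definition \ref{def-tropcoverorder}; by hypothesis the induced vertex order is $\Omega$. Each edge $q_k$ of $C$ connects two vertices $x_{k_1},x_{k_2}$ and carries a weight $w_k$, and in the flattened picture it runs monotonically along the orientation of $E$, leaving one of its vertices to the right and entering the other from the left while crossing the cut exactly $\#(\hat\pi^{-1}(p_0)\cap q_k)$ times. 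I would let the vertex that $q_k$ leaves to the right be the numerator $x_{k_i}$ and the one it enters from the left be the denominator $x_{k_j}$, and set
$$T_k:=w_k\cdot\Big(\tfrac{x_{k_i}}{x_{k_j}}\Big)^{2w_k},\qquad a_k:=w_k\cdot\#\big(\hat\pi^{-1}(p_0)\cap q_k\big).$$
The defining conditions of $N_{\underline{a},\Gamma,\Omega}^{\trop}$ then force the $a_k$ to coincide with the prescribed $\underline{a}$, and $w_k\mid a_k$ automatically.

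Next I would check that the resulting tuple lies in $\mathcal{T}_{\underline{a},\Gamma,\Omega}$, i.e.\ that $\prod_k T_k$ is constant in every variable $x_i$. This is precisely the balancing--constancy equivalence established inside the proof of Lemma \ref{lem-themap}, now read backwards: an edge adjacent to $x_i$ contributes the exponent $-2w$ to $x_i$ when it enters $x_i$ from the left and $+2w$ when it leaves $x_i$ to the right, so the balancing condition $\sum_{\text{left}}w=\sum_{\text{right}}w$ at $x_i$ is exactly the vanishing of the total exponent of $x_i$ in $\prod_k T_k$. I also need the convention of Definition \ref{def-tuples} to be respected, namely that for an edge with $a_k=0$ one must have $\big|x_{k_i}/x_{k_j}\big|<1$; this holds because $a_k=0$ means the edge crosses $p_0$ zero times, hence is a straight non-wrapping segment whose direction is forced by the order $\Omega$ to be the one selected by Lemma \ref{lem-seriesconstterm}.

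Finally I would confirm that the two constructions are mutually inverse. A labeled tropical cover contributing to $N_{\underline{a},\Gamma,\Omega}^{\trop}$ is determined up to label-respecting isomorphism by its type $\Gamma$, the order $\Omega$, the weights $w_k$, the edge directions, and the crossing numbers $\#(\hat\pi^{-1}(p_0)\cap q_k)$, since the edge lengths are then dictated by the weights and the positions of the branch points. Hence both compositions are the identity essentially by inspection: feeding the tuple above into Construction \ref{const-bij} redraws each $q_k$ with weight $w_k$, the same direction, and $\tfrac{a_k}{w_k}=\#(\hat\pi^{-1}(p_0)\cap q_k)$ wraps, thus reproducing $\hat\pi$, and reading a tuple off the cover produced by Construction \ref{const-bij} returns the original tuple. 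I expect the only genuinely delicate point to be the bookkeeping of edge directions --- keeping the forced direction in the $a_k=0$ case consistent with the genuinely recorded direction in the $a_k\neq0$ case --- after which the constancy check and the mutual-inverse verification follow immediately from Construction \ref{const-bij} and Lemma \ref{lem-themap}.
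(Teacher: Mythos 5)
Your proposal is correct and follows essentially the same route as the paper: the inverse is obtained by reading off, for each edge, the weight, the orientation (which fixes numerator versus denominator in $T_k$), and $a_k=w_k\cdot\#(\hat\pi^{-1}(p_0)\cap q_k)$, with the $a_k=0$ choice forced by $\Omega$ via Lemma \ref{lem-constterm}. You spell out two points the paper leaves as ``obvious'' --- that the resulting tuple satisfies the constancy condition of Definition \ref{def-tuples} (via the balancing--constancy equivalence from Lemma \ref{lem-themap}) and that a labeled cover is determined by its discrete data --- but this is elaboration, not a different argument.
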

\begin{proof}
 One can reverse Construction \ref{const-bij} in the obvious way.
For any edge $q_k$ which does not pass $L$ resp.\ $L'$, we set $a_k=0$. 
Assume $q_k$ connects the two vertices $x_{k_1}$ and $x_{k_2}$ and assume the order $\Omega$ satisfies $x_{k_1}<x_{k_2}$. Then it follows from Definition \ref{def-int} that we have to pick integration paths satisfying $\left|\frac{x_{k_1}}{x_{k_2}}\right|<1$ and thus using Lemma \ref{lem-constterm} the power series expansion of the corresponding constant term of the propagator (see Theorem \ref{thm-prop}) then contains quotients $\frac{x_{k_1}}{x_{k_2}}$ as required.
We pick the term $w_k\cdot\Big( \frac{x_{k_1}}{x_{k_2}}\Big)^{2w_k}$ for our tuple.

For any edge $q_k$ that passes $l_k$ times with weight $w_k$ through $L$ resp.\ $L'$, we set $a_k=w_k\cdot l_k$. Obviously, $w_k$ is a divisor of $a_k$ and thus the term $ w_k\cdot \Bigg(\Big( \frac{x_{k_1}}{x_{k_2}}\Big)^{2w_k}+\Big( \frac{x_{k_2}}{x_{k_1}}\Big)^{2w_k}\Bigg)$ shows up in the $q_k^{2a_k}$-coefficient of the propagator as required (see Theorem \ref{thm-prop}) and we can pick the summand corresponding to the orientation of our arrow as term for our tuple.

It is obvious that this inverse construction produces a bijection.
\end{proof}

Remember that a bridge of a connected graph is an edge whose removal produces two connected components.
\begin{corollary}\label{cor-bridgegraph}
 A Feynman graph $\Gamma$ satisfies $N_{\underline{a},\Gamma}^{\trop}=0$ for every $\underline{a}$, or (by Theorem \ref{thm-refined}) equivalently, $I_{\Gamma}(q_1,\ldots,q_{3g-3})=0$ if and only if $\Gamma$ contains a bridge.
\end{corollary}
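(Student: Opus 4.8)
The plan is to translate the statement into the language of flows on the graph $\Gamma$ and to exploit that a connected graph carries a nowhere-zero flow precisely when it is bridgeless.

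First I would reduce the claim to a pure existence question. By Theorem \ref{thm-refined} the coefficient of $q^{2\cdot\underline{a}}$ in $I_\Gamma$ equals $N_{\underline{a},\Gamma}^{\trop}$, and every labeled tropical cover is counted with the strictly positive multiplicity $\prod_k w_k$; hence all coefficients of $I_\Gamma$ are non-negative and $I_\Gamma=0$ holds if and only if $N_{\underline{a},\Gamma}^{\trop}=0$ for every $\underline{a}$, i.e.\ if and only if there is no labeled tropical cover with source type $\Gamma$ at all. By Theorem \ref{thm-bij} (summed over all orders $\Omega$) this is in turn equivalent to the emptiness of all the sets $\mathcal{T}_{\underline{a},\Gamma,\Omega}$ of Definition \ref{def-tuples}, so it suffices to decide for which $\Gamma$ such a tuple exists. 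Next I would reformulate the existence of a tuple as the existence of a nowhere-zero flow: a tuple assigns to each edge $q_k$ an orientation (a choice of numerator/denominator in $(x_{k_i}/x_{k_j})^{2w_k}$) together with a positive integer weight $w_k$, subject to $\prod_k T_k$ being constant in each $x_i$. As already observed in the proof of Lemma \ref{lem-themap}, the vanishing of the exponent of $x_i$ in this product is exactly the balancing condition $\sum_{\mathrm{in}} w=\sum_{\mathrm{out}} w$ at the trivalent vertex $x_i$. Thus a tuple is precisely a choice of orientation and positive integer weights on the edges of $\Gamma$ that is conserved at every vertex, i.e.\ a nowhere-zero integer circulation.

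For the direction that a bridge forces vanishing, suppose $e$ is a bridge whose removal splits the vertex set as $A\sqcup B$. Summing the conservation equation over all vertices of $A$, every edge with both endpoints in $A$ contributes with opposite signs and cancels, so the total reduces to $\pm w_e=0$; this contradicts $w_e\geq 1$. Hence no nowhere-zero circulation exists, no tuple exists, and $I_\Gamma=0$.

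For the converse I would produce a nowhere-zero circulation when $\Gamma$ is bridgeless. Fixing a reference orientation, a conserved flow is exactly an element of the cycle space $Z_1(\Gamma;\mathbb{R})=\ker\partial$, and an edge is a bridge precisely when it lies in the support of no cycle; so bridgelessness means that for each edge there is a circulation nonzero on it. A suitable generic rational combination of these is then nonzero on every edge simultaneously, and after clearing denominators and orienting each edge along its flow we obtain positive weights $w_k$ with conservation at every vertex. Choosing $\underline{a}=(w_1,\dots,w_{3g-3})$ and any order $\Omega$, this flow defines a tuple in $\mathcal{T}_{\underline{a},\Gamma,\Omega}$, realized as an honest labeled cover by Construction \ref{const-bij}, whence $N_{\underline{a},\Gamma}^{\trop}>0$ and $I_\Gamma\neq 0$. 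I expect the main obstacle to be exactly this converse step: one must exhibit the nowhere-zero circulation and verify that the resulting orientation/weight data satisfies the constancy constraint for one common tuple (a single consistent $\underline{a}$ and $\Omega$) rather than merely edge by edge. The cycle-space argument settles the existence cleanly, and Construction \ref{const-bij} converts the flow into a bona fide labeled tropical cover, so the remaining work is bookkeeping.
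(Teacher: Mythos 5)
Your proof is correct, and the equivalence-reduction and the bridge direction match the paper's reasoning (the paper phrases a tropical cover as a "system of rivers with no source or sink" and notes a bridge forces zero flow; your cut-summation argument is just the precise version of this, and the paper likewise invokes Lemma \ref{lem-themap} to transport the obstruction to the tuples $\mathcal{T}_{\underline{a},\Gamma,\Omega}$). Where you genuinely diverge is the converse. The paper proves existence of a nowhere-zero balanced flow on a bridgeless graph constructively: Construction \ref{const-orientation} builds an orientation by starting from a cycle and successively attaching oriented handles, Lemma \ref{lem-noorientedcut} shows this orientation admits no directed cut, and then the paper asserts (informally, "we just add enough water") that such an orientation supports positive balanced weights. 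You instead argue via the cycle space $Z_1(\Gamma;\mathbb{Q})=\ker\partial$: bridgelessness says every edge lies in the support of some circulation, so the circulations vanishing on a fixed edge form a proper subspace, and a generic rational combination avoids all finitely many of these; clearing denominators and orienting along the flow gives the required nowhere-zero integer circulation. Your route is shorter and replaces the paper's two auxiliary combinatorial statements (and its informal "add water" step) by a one-line linear-algebra fact, at the cost of being non-constructive; the paper's construction has the mild virtue of producing an explicit orientation. Your final step of choosing $\underline{a}=(w_1,\dots,w_{3g-3})$ is a nice touch: since every $a_k>0$, Definition \ref{def-tuples} leaves the orientation of every edge free, so no compatibility with a particular order $\Omega$ needs to be checked, and the conservation law is literally the constancy constraint. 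No gaps.
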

\begin{proof}
One can view a tropical cover as a system of rivers flowing into each other without any source or sink, since the weights of the edges are positive and the balancing condition is satisfied.
A graph with a bridge must have zero flow on the bridge, and thus cannot be the source of a tropical cover.

Alternatively, to get a nonzero contribution to the coefficient of  $q^{2\cdot \underline{a}}$ in the integral $I_{\Gamma}(q_1,\ldots,q_{3g-3})=0$, we must be given an order $\Omega$ and a tuple in $\mathcal{T}_{\underline{a},\Gamma,\Omega}$. But by Lemma \ref{lem-themap} such a tuple only exists if the balancing condition is satisfied at every vertex, thus the argument we just gave shows the coefficient is zero for a graph with a bridge.

Vice versa, we have to show that there exists a cover for every graph without a bridge.
To see this, we give an algorithm below how to construct for a given bridgeless graph an orientation of the edges that satisfies the following: there is no cut into two connected components $\Gamma_1$ and $\Gamma_2$ for which all cut edges are oriented from $\Gamma_1$ to $\Gamma_2$. It is easy to see that for such an orientation, we can insert positive weights for the edges such that the balancing condition is satisfied at every vertex (we just add enough water to the system of rivers). Thus the statement follows from Construction \ref{const-orientation} and Lemma \ref{lem-noorientedcut} below.
 
\end{proof}

\begin{construction}\label{const-orientation}
Let $\Gamma$ be a bridgeless graph.
\begin{enumerate}
 \item Choose an arbitrary cycle and orient its edges in one direction. Also choose a reference vertex $V$ on the cycle. Let $K$ denote the set of vertices on the cycle, this is the set of ``known vertices'' that we will enlarge in the following steps.
\item Let $U_1,\ldots,U_s$ denote the connected components of the subgraph induced on the vertex set of $\Gamma$ minus $K$. If $s\geq 1$, choose an arbitrary vertex $W\in U_1$. Since $\Gamma$ is connected, there is a path from $V$ to $W$ and we can choose it such that it respects our so far fixed orientations for the edges. At some point, the path must leave the ``known part'' and enter $U_1$, call this edge $E_1$. Since $E_1$ is not a bridge, there must be at least a second edge $E_2$ connecting the known part to $U_1$. We go along $E_1$ into $U_1$ until we reach $W$, and then continue until we hit via $E_2$ the known part again. We orient the edges we follow on the way. We add the set of vertices we meet on the way to $K$ and start again at (2).
\item At each step described above, we increase the vertex set of the known part. If all vertices are known, we orient the remaining edges arbitrarily.
\end{enumerate}
\end{construction}
\begin{lemma}\label{lem-noorientedcut}
Given a bridgeless graph $\Gamma$, we can use Construction \ref{const-orientation} to orient the edges such that the following is satisfied:
\begin{enumerate}
 \item Every vertex is contained in an oriented cycle that also contains the reference vertex $V$.
\item There is no cut into two connected components $\Gamma_1$ and $\Gamma_2$ such that all cut edges are oriented from $\Gamma_1$ to $\Gamma_2$.
\end{enumerate}
\end{lemma}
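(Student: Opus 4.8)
The plan is to prove assertions (1) and (2) together by induction on the steps of Construction \ref{const-orientation}, using (1) as a loop invariant and deducing (2) from it at the end. Concretely, I take as invariant the statement that at every stage each vertex of the current known set $K$ lies together with the reference vertex $V$ on an oriented cycle; equivalently, that there is a directed path from $V$ to it and a directed path from it back to $V$ using only already-oriented edges. This reachability is exactly the feature that makes the construction well defined, since the phrase ``a path from $V$ to $W$ respecting orientations'' in step (2) requires precisely that the vertex $A\in K$ at which this path leaves the known part is reachable from $V$ along oriented edges.

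For the base case, the initial cycle chosen in step (1) is oriented consistently and passes through $V$, so every one of its vertices lies on an oriented cycle through $V$ and the invariant holds. For the inductive step, suppose the invariant holds for the current $K$ and consider one execution of step (2): we follow an oriented path from $V$ to the vertex $A\in K$, cross the edge $E_1$ into $U_1$ (orienting it away from $K$), traverse $U_1$ along a path through $W$, and leave $U_1$ through a second edge $E_2$ into a known vertex $B$ (orienting $E_2$ into $K$). Here bridgelessness is used: it guarantees that $E_1$ is not a bridge, hence that a genuinely different exit edge $E_2$ exists, so that the ear through $U_1$ really has two endpoints in $K$. The newly oriented edges, read in the direction of travel, form a directed path from $A$ through each new vertex to $B$; prepending a directed path from $V$ to $A$ and appending a directed path from $B$ to $V$ (both available by the invariant applied to the known vertices $A$ and $B$) produces an oriented closed walk through $V$ and each new vertex. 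Thus the invariant is restored for the enlarged $K$. Since each execution adds at least one vertex to $K$, the process terminates, and the edges oriented arbitrarily in step (3) only enlarge the set of available directed paths, so they cannot destroy the invariant, which therefore holds for all vertices. This proves (1).

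To deduce (2), suppose for contradiction that there is a cut splitting $\Gamma$ into connected pieces $\Gamma_1$ and $\Gamma_2$ all of whose cut edges point from $\Gamma_1$ to $\Gamma_2$. The vertex $V$ lies in one piece; pick a vertex $u$ in the other piece, which is nonempty. By (1) there is an oriented closed walk through $V$ and $u$, and in particular a directed path from $V$ to $u$ and one from $u$ to $V$. Whichever piece contains $V$, one of these two directed paths must travel from $\Gamma_2$ to $\Gamma_1$, and any such crossing has to use a cut edge oriented from $\Gamma_2$ to $\Gamma_1$; this contradicts the assumption that every cut edge points the other way. Hence no such oriented cut exists.

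I expect the inductive step for (1) to be the main obstacle, and within it the two points that need genuine care are: first, checking that the traversal inside $U_1$ can be arranged along a path and oriented consistently, so that the concatenation with the paths from $V$ to $A$ and from $B$ to $V$ is truly an oriented walk rather than merely an undirected one; and second, the correct and complete use of bridgelessness to supply the return edge $E_2$ distinct from $E_1$. Once the invariant is set up correctly, the passage to (2) is the short cut-crossing argument above and presents no further difficulty.
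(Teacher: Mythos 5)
Your proof is correct and follows essentially the same route as the paper: part (1) is established by induction along the steps of Construction \ref{const-orientation} (initial oriented cycle through $V$, then oriented ``ears'' concatenated with directed paths to and from $V$ supplied by the invariant), and part (2) is the same cut-crossing argument, namely that a directed closed walk through $V$ and a vertex on the other side of the cut must use a cut edge oriented against the assumed direction. You merely spell out in detail what the paper dismisses as ``obvious from the construction,'' including the careful point that the concatenation is a closed walk rather than a simple cycle, which is all that is needed for (2).
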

\begin{proof}
The first statement is obvious from the construction: we start with a cycle containing $V$ and add oriented ``handles''. For the second statement, assume there was such a cut, and assume without restriction that $V$ is in $\Gamma_1$. Choose an arbitrary vertex $W$ in $\Gamma_2$. By (1), there is an oriented cycle containing $W$ and $V$. This cycle must contain at least two cut edges which are thus oriented in opposite direction.
\end{proof}

\section{Quasimodularity}\label{sec-quasimod}
In this section, we review the quasimodularity of the individual Feynman integrals \cite{GM16}.
The Mirror Symmetry Theorem \ref{thm-mirror} implies
$$F_g(q)=\sum_{d}N_{d,g}q^{2d}= \sum_{\Gamma}\frac{1}{|\Aut(\Gamma)|}\Big(\sum_\Omega I_{\Gamma,\Omega}\Big),$$
where the sum runs over all Feynman graphs $\Gamma$ and orders $\Omega$ as in Definition \ref{def-int}.
It is known that $F_g(q)$ is a quasimodular form \cite{Dij95, KZ95}.
In fact, already the individual summands $I_{\Gamma,\Omega}(q)$ are quasimodular forms (of mixed weight). We start by recalling the necessary definitions.

\begin{definition}
A function $f:\H\to\C$ (where $\H$ denotes the complex upper half plane) is called an \emph{almost holomorphic modular form of weight $k\in\Z$} if
$\forall \left(\begin{smallmatrix} a&b\\c&d\end{smallmatrix}\right)\in \SL_2(\Z)$, we have
\[
f\left(\frac{a\tau+b}{c\tau+d}\right)=(c\tau+d)^k f(\tau)
\]
and $f$ has the shape $(\tau=u+iv)$
\begin{equation}\label{shape}
f(\tau)= \sum_{j=0}^r \frac{f_j(\tau)}{v^j}
\end{equation}
with $f_j$ holomorphic in $\H\cup \{\infty\}$. The constant term $f_0$ is called a \emph{quasimodular form} and $r$ is called the \emph{depth} of $f$.
Note that
\[
f_0(\tau)=\lim_{\overline{\tau}\to\infty} f(\tau),
\]
if we view $\tau$ and $\bar{\tau}$ as independent variables.
\end{definition}


The following theorem is proved in \cite{GM16}, Corollary 8.4:

\begin{theorem}\label{QuasiTheorem} For all Feynman graphs $\Gamma$ and orders $\Omega$ as in Definition \ref{def-int},
the function $I_{\Gamma, \Omega}$ is a quasimodular form of mixed weight. The highest appearing weight is $6g-6$.
\end{theorem}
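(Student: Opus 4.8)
The plan is to reduce everything to Lemma~\ref{lem-Igammaconst}, which expresses $I_{\Gamma,\Omega}(q)$ as the constant coefficient, in the variables $x_1,\dots,x_{2g-2}$, of $P_{\Gamma,\Omega}=\prod_{k=1}^{3g-3}\bigl(-P(x_{k_1}/x_{k_2},q)\bigr)$. Writing $q^2=e^{2\pi i\tau}$ with $\tau=u+iv\in\H$, the propagator $P=\frac{1}{4\pi^2}\wp+\frac{1}{12}E_2$ is assembled from two weight-$2$ building blocks: the Weierstra\ss{} function $\wp(z,\tau)$, a meromorphic Jacobi form of weight $2$ and index $0$ for the action $z\mapsto z/(c\tau+d)$, and the quasimodular Eisenstein series $E_2$. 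Since $\Gamma$ has $3g-3$ edges, $P_{\Gamma,\Omega}$ carries total weight $2(3g-3)=6g-6$, and the heart of the matter is that passing to the constant coefficient in the elliptic variables should preserve this weight while turning genuine modularity into quasimodularity; this is what the target weight $6g-6$ reflects.

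To implement this I would work in the framework of (quasi-)Jacobi forms. Completing the anomalous Eisenstein series to $\widehat E_2:=E_2-\frac{3}{\pi v}$, which is almost holomorphic modular of weight $2$, one checks from the weight-$2$ transformation of $\wp$ that each factor becomes a weight-$2$ almost holomorphic Jacobi form of index $0$, and hence that the product transforms with weight $6g-6$. The concrete handle on the constant coefficient is provided by Theorem~\ref{thm-prop}: the coefficient of $x^{2m}$ in $-P$ equals $\frac{m}{1-q^{2m}}$ for $m>0$ and $\frac{|m|q^{2|m|}}{1-q^{2|m|}}$ for $m<0$, and vanishes for $m=0$. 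Extracting the constant coefficient in all $x_j$ therefore amounts to summing, over all assignments of integer Fourier modes to the edges that balance at every vertex---exactly the flows appearing in the balancing argument of Lemma~\ref{lem-themap}---the corresponding products of these Lambert series. The flows form a lattice of rank $g$, and the remaining task is to show that summing the products of $\frac{m}{1-q^{2m}}$-type terms over this lattice produces an element of $\C[E_2,E_4,E_6]$, i.e.\ a quasimodular form, of weight $6g-6$; the identification of $I_{\Gamma,\Omega}$ with the holomorphic part (the summand $f_0$ of \eqref{shape}) then follows because $P$ is holomorphic in $\tau$, so every negative power of $v$ is contributed by the completion and disappears in the limit $v\to\infty$.

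The main obstacle is precisely the claim that forming the constant coefficient in $z$ is weight-preserving: Laurent expansion in the $x_j$ does \emph{not} commute with the modular substitution $z_j\mapsto z_j/(c\tau+d)$, and indeed the constant term of the weight-$2$ form $\wp$ is $-\frac{\pi^2}{3}E_2$, which is quasimodular but not modular. Controlling the resulting anomaly is the technical core of the proof; it is governed by the $z$-derivatives of the integrand (again quasi-Jacobi forms of lower complexity), which together with the completion defect $E_2-\widehat E_2=\frac{3}{\pi v}$ account exactly for the quasimodular correction terms. Two further points need care: the propagator has poles at $z_{k_1}=z_{k_2}$, so one must fix the region of expansion according to the order $\Omega$ as in Lemma~\ref{lem-constterm} before extracting coefficients, and one must verify that the flow sums genuinely close up inside the ring of quasimodular forms in the single correct weight $6g-6$, rather than merely in a direct sum of weights. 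Once these are settled, $I_{\Gamma,\Omega}$ is the holomorphic part of an almost holomorphic modular form of weight $6g-6$, which is the assertion.
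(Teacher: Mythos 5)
Your overall framework coincides with the paper's: start from Lemma \ref{lem-Igammaconst}, complete the propagator by a suitable multiple of $\frac{1}{v}$ so that each factor becomes an almost holomorphic object of weight $2$ and the product of the $3g-3$ factors has weight $6g-6$, extract the constant coefficient in the elliptic variables, and recover $I_{\Gamma,\Omega}$ as the holomorphic part $f_0$ of \eqref{shape} in the limit $\overline{\tau}\to\infty$. The gap is that you never close the step you yourself single out as the ``technical core,'' namely the compatibility of constant-coefficient extraction with the modular substitution. You sketch two ways out, and neither is carried through: controlling the anomaly via $z$-derivatives of quasi-Jacobi forms (machinery you do not develop, and which turns out to be unnecessary), and showing that the sum over balanced integer flows of products of the Lambert-type series $\frac{m}{1-q^{2m}}$ closes up in $\C[E_2,E_4,E_6]$ in pure weight $6g-6$. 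The latter assertion is essentially equivalent to the theorem itself, so citing it as ``the remaining task'' is circular.

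The missing idea that makes the paper's proof short is the following. Once the propagator is completed to $\widehat{P}(z,\tau)$ by adding the appropriate multiple of $\frac{1}{v}$ to $\frac{1}{4\pi^2}\wp(z,\tau)+\frac{1}{12}E_2(\tau)$, the transformation laws of $\wp$, $E_2$ and $\mathrm{Im}$ combine into the exact identity $\widehat{P}\left(z,-\frac{1}{\tau}\right)=\tau^2\,\widehat{P}(z\tau,\tau)$: the elliptic variable is merely rescaled, with no additive anomaly, because the index is zero. Extracting the constant Fourier coefficient in $x=e^{2\pi iz}$ is invariant under such a rescaling, since $\mathrm{coeff}_{[x^0]}\left(f\left(x^A\right)\right)=\mathrm{coeff}_{[x^0]}\left(f(x)\right)$ for $A\in\C$. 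Hence $\widehat{I}_{\Gamma,\Omega}(-1/\tau)=\tau^{6g-6}\widehat{I}_{\Gamma,\Omega}(\tau)$ follows at once, with no derivative corrections; your observation that the constant term of $\wp$ is only quasimodular is exactly the symptom of working with the uncompleted object, and it disappears once the $\frac{1}{v}$-term is inserted \emph{before} taking constant coefficients. What then remains is the shape \eqref{shape} and holomorphy at $i\infty$, which the paper reads off from the Fourier expansion of $\wp$ and the boundedness of $E_2$. Your identification of the weight, of the completion, and of the holomorphic-projection mechanism is correct, but as written the proof of the transformation law --- the heart of the theorem --- is absent.
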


\begin{remark}
If $\Gamma$ is of genus $g>1$, by Theorem \ref{QuasiTheorem}, $I_{\Gamma
}(q)=\sum_{\Omega}I_{\Gamma,\Omega}(q)$ is a quasi-modular form of mixed weight (at most
$6g-6$). Hence $I_{\Gamma}(q)\in\mathbb{Q}[E_{2},E_{4},E_{6}]$, where
$E_{2},E_{4},E_{6}$ are the Eisenstein series of weight $2$, $4$ and $6$.
Hence%
\[
I_{\Gamma}(q)=\sum_{\substack{2i+4j+6k\leq 6g-6\\i,j,k\in\mathbb{N}_{0}}%
}c_{\Gamma}^{i,j,k}\cdot E_{2}^{i}E_{4}^{j}E_{6}^{k}%
\]
with $c_{\Gamma}^{i,j,k}\in\mathbb{Q}$. Theorem \ref{thm-refined} gives an algorithm for computing the
series%
\[
I_{\Gamma}(q)=\sum\nolimits_{\underline{a}}N_{\underline{a},\Gamma}%
^{\trop}q^{2\left\vert \underline{a}\right\vert }\text{.}%
\]
Determining $I_{\Gamma}(q)$ up to sufficiently high order yields a linear system
of equations for the coefficients $c_{\Gamma}^{i,j,k}$ which admits a unique
solution. In this way, we can use our \textsc{Singular} package \textsc{ellipticcovers.lib} \cite{BBBM} to compute the respresentation of $I_{\Gamma}(q)$ in terms of Eisenstein series. A concrete example is presented in Example \ref{ex-modular}.
Note that we can compute more orders than necessary, thus yielding an overdetermined system of linear equations. Theorem \ref{QuasiTheorem} predicts that such an overdetermined system still has a unique solution. For our example below, we verified this statement computationally.
\end{remark}

\begin{remark}
Note that in an earlier version of this paper, we had claimed that the individual summands $I_{\Gamma, \Omega}(q)$ are quasimodular forms of weight $6g-6$. The proof had a gap however --- it holds true that they are quasimodular forms of mixed weight up to $6g-6$.
As it follows already from \cite{KZ95}, the whole sum $\sum_{\Gamma}\frac{1}{|\Aut(\Gamma)|}(\sum_\Omega I_{\Gamma,\Omega})$ is a quasimodular form of pure weight $6g-6$, so the contributions of lower weight cancel in the sum.
The example below shows that they may already cancel in a sum $I_\Gamma =\sum_\Omega  I_{\Gamma, \Omega}(q)$. It is an interesting question whether this holds true in general.
\end{remark}

\begin{example}\label{ex-modular}
For the Feynman graph $\Gamma_{1}$ as depicted in Figure \ref{fig-raupe}, we
compute%
\[
I_{\Gamma_{1}}(q)=32q^{4}+1792q^{6}+25344q^{8}+182272q^{10}+886656q^{12}%
+O(q^{14})\text{.}%
\]
This determines a unique solution for the coefficients $$c_{\Gamma_{1}}%
^{0,0,2},c_{\Gamma_{1}}^{0,3,0},c_{\Gamma_{1}}^{1,1,1},c_{\Gamma_{1}}%
^{2,2,0},c_{\Gamma_{1}}^{3,0,1},c_{\Gamma_{1}}^{4,1,0},c_{\Gamma_{1}}^{6,0,0}$$
of the weight $12$ monomials, which corresponds to the quasi-modular
representation%
\[
I_{\Gamma_{1}}(q)=\frac{16}{1492992}\left(  4 E_{6}^{2}+4 E_{4}%
^{3}-12 E_{2}E_{4}E_{6}-3 E_{2}^{2}E_{4}^{2}+4 E_{2}^{3}%
E_{6}+6 E_{2}^{4}E_{4}-3 E_{2}^{6}\right)  \text{.}%
\]
In the same way, for the graph $\Gamma_{2}$ as in Figure \ref{fig-stern} we
obtain%
\begin{figure}
 \input{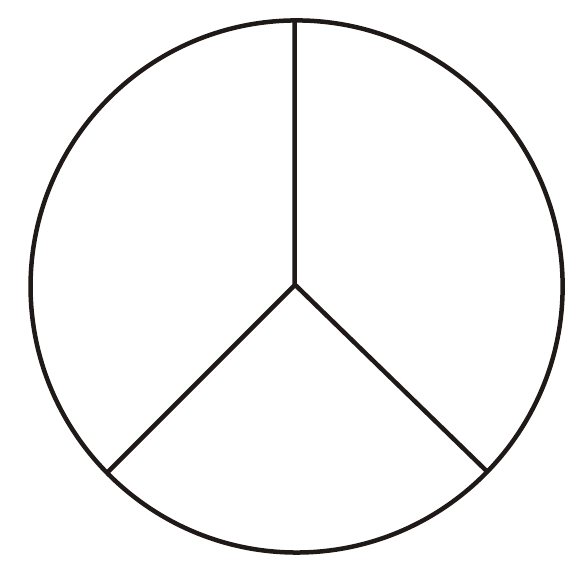_t}
\caption[Feynman Graph of genus $3$]{Feynman graph $\Gamma_2$ of genus $3$.}\label{fig-stern}
\end{figure}
\[
I_{\Gamma_{2}}(q)=1152q^{6}+20736q^{8}+165888q^{10}+843264q^{12}+O(q^{14})
\]
leading to the representation%
\[
I_{\Gamma_{2}}(q)=\frac{24}{1492992}\left(  3 E_{4}^{3}-9 E_{2}%
^{2}E_{4}^{2}+9 E_{2}^{4}E_{4}-3 E_{2}^{6}\right)  \text{.}%
\]
Since $\Gamma_{1}$ and $\Gamma_{2}$ are the only Feynman graphs of genus $g=3$
without bridges, this yields the total generating function $F_{3}(q)$ for
the Hurwitz numbers $N_{d,3}$ as the $|\Aut(\Gamma_{i})|^{-1}$-weighted sum of
$I_{\Gamma_{1}}(q)$ and $I_{\Gamma_{2}}(q)$:
\begin{align*}
 F_{3}(q)  & =\frac{1}{16}I_{\Gamma_{1}}(q)+\frac{1}{24}I_{\Gamma_{2}%
}(q)\\
& =\frac{1}{1492992}\left(  4 E_{6}^{2}+7 E_{4}^{2}-12
E_{2}E_{4}E_{6}-12 E_{2}^{2}E_{4}^{2}+4 E_{2}^{3}E_{6}+15
E_{2}^{4}E_{4}-6 E_{2}^{6}\right)  \text{.}%
\end{align*}
This agrees with the formula for $F_{3}(q)$ given in \cite[Section 3]{Dij95} (up to a factor of $4!$ explained by the choice of convention mentioned in Remark \ref{rem-branchpointsmarked}).
\end{example}

\section{Appendix: Correspondence theorem}\label{ap-corres}
We now prove Theorem \ref{thm-corres}.
We do this by cutting covers of $E$ at the preimages of the base point $p_0$,
thus producing a (possibly reducible) cover of the tropical line
$\mathbb{TP}^1:=\RR\cup\{-\infty\}\cup\{\infty\}$ as in
\cite{CJM10}. We use the Correspondence Theorem of \cite{CJM10} relating the
numbers of tropical covers to certain tuples of elements of the symmetric group
that correspond to algebraic covers of $\mathbb{P}^1$.
We study ``gluing factors'' that relate the tropical multiplicity of a cover of
$E$ to the tropical multiplicity of the cut cover. These factors equal the
number of ways to produce a tuple of elements of the symmetric group
corresponding to a cover of an elliptic curve $\mathcal{E}$ from a tuple
corresponding to a cover of $\mathbb{P}^1$.


By pairing a cover of $\mathcal{E}$ with a monodromy representation, the Hurwitz number $N_{d,g}$ of Definition \ref{def-alghurwitz} equals the following count of tuples of permutations:

\begin{remark}[cf. \cite{RY10}]\label{eqn-ellipticCount}
The Hurwitz numbers $N_{d,g}$ of Definition \ref{def-alghurwitz} are given by $\frac{1}{d!}$
times the number of tuples $(\tau_1,\ldots,\tau_{2g-2},
\alpha,\sigma)$ of permutations in $\Perm_d$ such that
\begin{enumerate}
\item the $\tau_i$ are transpositions for all $i=1,\ldots,2g-2$,
\item  the equation $\tau_{2g-2}\circ\ldots\circ\tau_1\circ
\sigma=\alpha\circ\sigma\circ\alpha^{-1}$ holds,
\item the subgroup $\langle\tau_1,\ldots,\tau_{2g-2},\sigma,\alpha\rangle$ acts
transitively on $\{1,\ldots,d\}$.
\end{enumerate}
\end{remark}
Condition (2) is explained by Figure \ref{fig-monodrom} sketching the generators
of the fundamental group $\pi_1(\mathcal{E}\setminus{p_1,\ldots,p_{2g-2}})$:
The rectangle represents the elliptic curve as a torus, the left side is
identified with the right side and the top with the bottom. Clearly $\alpha$
and $\sigma$ belong to the two generators of $\pi_1(\mathcal{E})$. The loops
around the branch points $p_1,\ldots,p_{2g-2}$ induce the permutations
$\tau_1,\ldots,\tau_{2g-2}$ of sheets on the source curve. Obviously going
around all branch points simultaneously is homotopic to the composition of the
paths around each of the points in order $p_1,\ldots,p_{2g-2}$, on the one
hand, and on the other hand it is homotopic to the path that goes clockwise
around the rectangle. This identity has to ``lift'' to the source curve,
implying that
$\tau_{2g-2}\circ\ldots\circ\tau_1=\alpha\circ\sigma\circ\alpha^{-1}\circ
\sigma^{-1}$.

\begin{figure}
\begin{center}
 \input{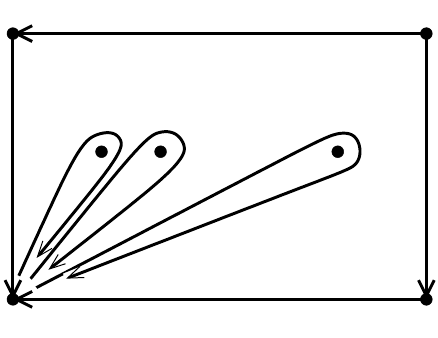_t}
\end{center}

 \caption{The generators of the fundamental group $\pi_1(\mathcal{E}\setminus{p_1,\ldots,p_{2g-2}})$.}
\label{fig-monodrom}
\end{figure}

Let $\Delta$ be a partition of $d$. A permutation $\sigma$ has cycle type $\Delta$ if after decomposition into a disjoint union of cycles $\Delta$ equals the partition of the lengths of the cycles.
The conjugacy classes of $\Perm_d$ are the sets
of permutations with same cycle type $\Delta$. Thus
$\sigma$ and $\alpha\circ\sigma\circ\alpha^{-1}$ have the same cycle type for any $\sigma$ and $\alpha$. 

Given a tuple as in Remark \ref{eqn-ellipticCount} we now construct an associated tropical cover.
As before, we fix the base point $p_0$ and the  $2g-2$ branch points
$p_1,\ldots,p_{2g-2}$ (ordered clockwise) in $E$.

\begin{construction}\label{rem-perm2cover}
 Given a tuple as in Remark \ref{eqn-ellipticCount} we construct a tropical
cover of $E$ with branch points $p_1,\ldots,p_{2g-2}$ as follows:
\begin{enumerate}
 \item For each cycle $c$ of $\sigma$ of length $m$ draw an edge of weight
$m$ over $p_0$ and label it with the corresponding cycle.
 \item For $i=1,\ldots,2g-2$, successively cut or join edges over $p_i$
according to the effect of $\tau_i$ on
$\tau_{i-1}\circ\ldots\circ\tau_1\circ\sigma$. Label the
new edges as before.
 \item Glue the outcoming edges attached to points over $p_{2g-2}$ with the
edges over
$p_0$ according to the action of $\alpha$ on the cycles of $\sigma$. More
precisely: Glue the edge with the label $\alpha\circ c\circ\alpha^{-1}$ over
$p_{2g-2}$ to the edge with label $c$ over $p_0$.
 \item Forget all the labels on the edges.
\end{enumerate}
\end{construction}
Note that for a cycle $c=(n_1 \ldots n_l)$ of length $l\geq 2$ we have $\alpha \circ c \circ\alpha^{-1}=(\alpha(n_1) \ldots \alpha(n_l) )$. We use the same convention for cycles of length $1$.

\begin{example}
 Let $g=2$ and $d=4$ and consider the tuple of permutations
$$(\tau_1,\tau_2,\tau_3,\tau_4,\alpha,\sigma)=((1\,3),(2\,4),(1\,2),(1\,3),(2\,
3\,4),(2\,3))$$ in $\Perm_4$. We see that $\sigma=(2\,3)=(1)(2\,3)(4)$ has cycle
type $(2,1,1)$. Moreover
$\alpha\circ\sigma\circ\alpha^{-1}=(1)(2)(3\,
4)=\tau_4\circ\ldots\circ\tau_1\circ\sigma$ is fulfilled, so the tuple
contributes to the count of $N_{4,2}$. Figure \ref{fig-perm2cover1}
sketches the construction of Remark \ref{rem-perm2cover} up to the gluing step
(the object can be considered as a cover of the tropical line $\mathbb{TP}^1$
as described below).
\begin{figure}
 \begin{center}
 \input{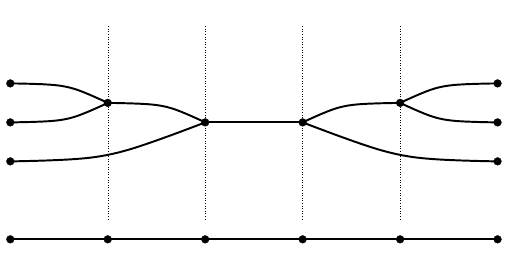_t}
 \end{center}
\caption{The cover of $\mathbb{TP}^1$ associated to a tuple of
permutations.}\label{fig-perm2cover1}
\end{figure}

In the gluing step the vertices $p_0$ and $p_0'$ are going to be identified.
Since we have
\begin{eqnarray*}
 \alpha\circ(1)\circ\alpha^{-1}&=&(1)\\
_\wedge\  \alpha\circ(2\,3)\circ\alpha^{-1}&=&(3\,4)\\
_\wedge\ \ \ \alpha\circ(4)\circ\alpha^{-1}&=&(2),
\end{eqnarray*}
the ends of the source curve are glued according to the red numbers in Figure
\ref{fig-perm2cover1}. The result is the cover of $E$ depicted on the left in
Figure \ref{fig-perm2cover2}. Note that choosing $\alpha=(2\,4)$ yields the
same gluing, while $\alpha'=(1\,2\,4)$ also fulfills
$\alpha'\circ\sigma\circ\alpha'^{-1}=\tau_4\circ\ldots\circ\tau_1\circ\sigma$,
but since
\begin{eqnarray*}
 \alpha'\circ(1)\circ\alpha'^{-1}&=&(2)\\
_\wedge\  \alpha'\circ(2\,3)\circ\alpha'^{-1}&=&(3\,4)\\
_\wedge\ \ \ \alpha'\circ(4)\circ\alpha'^{-1}&=&(1),
\end{eqnarray*} it provides
 a different gluing, sketched on the right side of Figure
\ref{fig-perm2cover2}. In particular the combinatorial types of the source
curves are different.
\begin{figure}
 \begin{center}
 \input{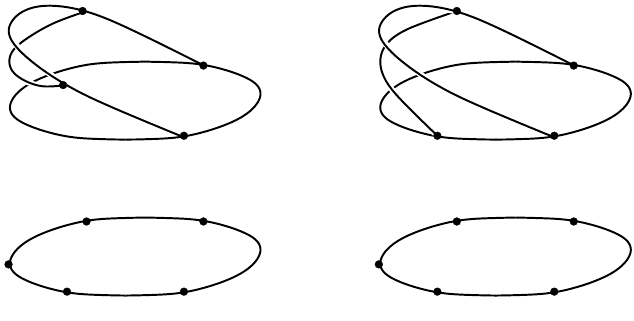_t}
 \end{center}
\caption[Two covers of $E$ associated to tuples of
permutations]{Two different gluings of the same cover of the line.}\label{fig-perm2cover2}
\end{figure}

\end{example}

Now we describe how to cut tropical covers of $E$ in general, thus producing covers of a line. As usual, we neglect edge lengths --- to be precise, they have to adapted accordingly.
\begin{construction}\label{const-cutting}
To every cover $\pi:C\rightarrow E$ of degree $d$ we associate a (possibly disconnected) tropical cover 
$\tilde{\pi}:\tilde{C}\rightarrow\mathbb{TP}^1$ of the line
$\mathbb{TP}^1$ of the same degree,
by cutting $E$ at $p_0$ and the source curve $C$ at every preimage of $p_0$.
\end{construction}

For the definition of (connected) tropical covers of $\mathbb{TP}^1$, see \cite{CJM10}. This definition can easily be generalized by allowing the source curve to be disconnected, and adapting the multiplicity accordingly as follows.
Notice that the multiplicity of a single edge of weight $m$ covering $\mathbb{TP}^1$  is $\frac{1}{m}$ (this case is not taken care of in \cite{CJM10} where it is implicitly assumed that every source curve contains at least one vertex).
The multiplicity of a cover $\tilde{\pi}:\tilde{C}\rightarrow\mathbb{TP}^1$ is  \begin{equation}\mult(\tilde{\pi}):= \prod_K \frac{1}{w_K} \cdot |\Aut(\tilde{\pi})|^{-1} \cdot\prod_e w_e,\label{eq-multcoverp1}\end{equation}
 where the first product goes over all connected components $K$ of $\tilde{C}$ that just consist of one single edge mapping to $\mathbb{TP}^1$ with weight $w_K$ and the second product goes over all bounded edges $e$ of $\tilde{C}$, with $w_e$ denoting their weight.
Note that in \cite{CJM10}, the factor $|\Aut(\tilde{\pi})|^{-1}$ is simplified to
$\frac{1}{2}^{l_1+l_2}$, where $l_1$ denotes the number of balanced forks (i.e.,
adjacent ends of the same weight) and $l_2$ denotes the number of wieners (i.e.
pairs of bounded edges of the same weight sharing both end vertices). Since we
allow disconnected covers, we will have other contributions to the automorphism
group: connected components consisting of single edges of the same weight as
above can be permuted. So we get a contribution of $\frac{1}{r!}$ to
$|\Aut(\tilde{\pi})|$ if for a certain weight $m$ there are exactly $r$ copys of
connected components consisting of a single edge of weight $m$.

For a cover $\pi:C\rightarrow E$, we denote by
$\Delta$ the partition
of $d$ given by the weights of the edges over $p_0$. For the cut cover $\tilde{\pi}$ (see Construction \ref{const-cutting}) these
are exactly the ramification profiles over $-\infty$ and $\infty$.
\begin{example}
The two covers of $E$ depicted in Figure \ref{fig-perm2cover2} cut at $p_0$ both
give a cover of the line as sketched in Figure \ref{fig-perm2cover1} (where we dropped the labels on the edges).
The multiplicity of the cover of the line equals the product of the weight of the bounded edges, i.e., $3^2\cdot4$, since there are no automorphisms.
\end{example}
\begin{example}
Figure \ref{fig-coverOfLine} shows a disconnected cover $\tilde\pi$ of the line of degree
$17$ with $6$ (simple) ramifications and profiles $(4,4,4,1,1,1,1,1)$ over the
ends. Its multiplicity equals
$$\mult(\tilde\pi)=
\underbrace{
\left(\frac{1}{2}\right)^2
\cdot
\frac { 1 } { 2 }
\cdot
\frac{1}{2!}
\cdot
\frac{1}{3!}
}_{=|\Aut(\tilde\pi)|}
\cdot
\underbrace{\frac{1}{4\cdot4\cdot1\cdot1\cdot1}}_{=\prod_K \frac{1}{w_K}}
\cdot
\underbrace{4\cdot2\cdot2\cdot2\cdot2}_{=\prod_e w_e}
=\frac{1}{24},
$$
where the first factor contributing to the automorphisms comes from the two
balanced forks, the second from the wiener and the other two from two
single-edge components of weight $4$ and three single-edge components of weight $1$ respectively.
\begin{figure}
 \begin{center}
  \input{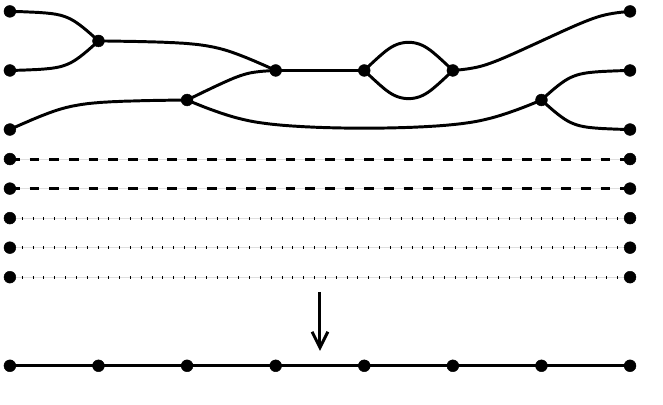_t}
 \end{center}
\caption{A (disconnected) tropical cover of the
line.}\label{fig-coverOfLine}
\end{figure}
\end{example}

The Correspondence Theorem in \cite{CJM10} matches tropical covers of $\mathbb{TP}^1$ as above with algebraic covers of $\PP^1$ having two
ramifications of profile $\Delta$ over $0$ and $\infty$ respectively and only
simple
ramifications else. 

Similar to Remark \ref{eqn-ellipticCount}, the associated
Hurwitz numbers can be written in terms of tuples of elements of the symmetric group.
\begin{remark}\label{rem-doubleHurwitz}
 The double Hurwitz number $H_{d,g}(\PP^1,\Delta,\Delta)$ counting the number of
(isomorphism classes of) covers $\phi:\mathcal{C}\rightarrow \PP^1$ of degree
$d$ (each weighted with $\frac{1}{|\Aut(\phi)|}$), where $\mathcal{C}$ is a
possibly disconnected curve such that the sum of the genera of its connected
components equals $g$, having
ramification profile $\Delta$ over $0$ and $\infty$ and only simple
ramifications else, equals
$\frac{1}{d!}$ times the number of tuples
$(\tau_1,\ldots,\tau_{2g-2},
\sigma,\sigma')$ in $\Perm_d$ such that
\begin{itemize}
\item $\sigma$ and $\sigma'$ are permutations of cycle
type $\Delta$,
\item the $\tau_i$ are transpositions for all $i=1,\ldots,2g-2$,
\item the equation $\sigma'\circ\tau_{2g-2}\circ\ldots\circ\tau_1\circ
  \sigma=\id_{\Perm_d}$ holds.
\end{itemize}
Note that as in Definition \ref{def-alghurwitz}, it follows from the Riemann-Hurwitz formula that the number of simple ramifications is $2g-2$.
The condition $\sigma'\circ\tau_{2g-2}\circ\ldots\circ\tau_1\circ
  \sigma=\id_{\Perm_d}$ reflects the fact that the fundamental group $\pi_1(\PP^1)$ is trivial.
We do not include a condition about transitivity here, since we allow also disconnected covers.
\end{remark}

As in Construction \ref{rem-perm2cover}, we can associate a tropical cover of the line to a tuple as in Remark \ref{rem-doubleHurwitz}. 
The procedure is the same, we just drop the gluing step (3).
The statement of the Correspondence Theorem 5.28 in \cite{CJM10} is that for a fixed tropical cover $\tilde{\pi}:\tilde{C}\rightarrow \mathbb{TP}^1$, the tropical multiplicity equals $\frac{1}{d!}$ times the number of tuples that yield $\tilde{\pi}$ under the above procedure.

We now relate the tuples in Remarks \ref{eqn-ellipticCount} and \ref{rem-doubleHurwitz}. resp.\ the multiplicities of a tropical cover $\pi:C\rightarrow E$ and the cut cover $\tilde{\pi}:\tilde{C}\rightarrow \mathbb{TP}^1$ of Construction \ref{const-cutting}.
\begin{definition}\label{def-npipi}
 Given a cover $\pi:C\rightarrow E$ and the cut cover $\tilde{\pi}:\tilde{C}\rightarrow \mathbb{TP}^1$ of Construction \ref{const-cutting},
we choose a tuple $(\tau_1,\ldots,\tau_{2g-2},
\sigma,\sigma')$ that yields $\tilde\pi$ when applying Construction \ref{rem-perm2cover} (minus the gluing in step (3)).
We define $n_{\tilde\pi,\pi}$ to be the number of $\alpha\in \Perm_d$ satisfying
$\alpha\circ\sigma\circ\alpha^{-1}=\sigma'$ and, when labeling $\tilde{\pi}$
with cycles according to our choice of tuple
$(\tau_1,\ldots,\tau_{2g-2},\sigma,\sigma')$ and performing step (3) and (4) of
Construction \ref{rem-perm2cover} (gluing and forgetting the cycle labels), we
obtain $\pi$.
\end{definition}
Note that $n_{\tilde\pi,\pi}$ is well-defined (i.e., does not depend on the choice of the tuple $(\tau_1,\ldots,\tau_{2g-2},\sigma,\sigma')$). This is true since any other representative $(\bar\tau_1,\ldots,\bar\tau_{2g-2},
\bar\sigma,\bar\sigma')$ is a conjugate of $(\tau_1,\ldots,\tau_{2g-2},
\sigma,\sigma')$ and therefore the desired $\bar\alpha$ are in one-to-one
correspondence to the desired $\alpha$.


\begin{figure}
  \begin{center}
   \input{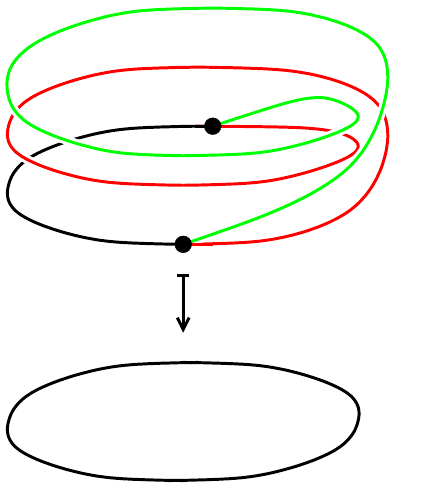_t}
  \end{center}
\caption{A tropical elliptic cover with a long wiener. The two wiener-edges (i.e., the red and the green edge) have the same weight.}\label{fig-longWiener}
 \end{figure}

\begin{proposition}\label{prop-gluingFactor}
 For a cover $\pi:C\rightarrow E$ with partition $\Delta=(m_1,\ldots,m_r)$
over the base point, the number $n_{\tilde\pi,\pi}$ of Definition \ref{def-npipi} is given by
$$n_{\tilde\pi,\pi}=m_1\cdot\ldots\cdot m_r\cdot\frac{|\Aut(\tilde\pi)|}{|\Aut(\pi)|}.$$
\end{proposition}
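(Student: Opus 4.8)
The plan is to factor the count $n_{\tilde\pi,\pi}$ into a purely ``internal'' contribution $\prod_i m_i$, recording the freedom of $\alpha$ inside the cycles, and a ``gluing'' contribution counting how many re-gluings of $\tilde\pi$ reproduce $\pi$; the latter will turn out to equal $|\Aut(\tilde\pi)|/|\Aut(\pi)|$.

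First I would fix a tuple $(\tau_1,\dots,\tau_{2g-2},\sigma,\sigma')$ realizing $\tilde\pi$ as in Definition \ref{def-npipi} and analyze the condition $\alpha\circ\sigma\circ\alpha^{-1}=\sigma'$. The $\alpha\in\Perm_d$ satisfying it form a coset of the centralizer $Z(\sigma)$, of order $\prod_i m_i\cdot\prod_m a_m!$, where $a_m$ is the number of parts of $\Delta$ equal to $m$. Each such $\alpha$ induces a weight-preserving bijection $\bar\alpha$, $c\mapsto\alpha c\alpha^{-1}$, from the cycles of $\sigma$ (the edges of $\tilde\pi$ over $-\infty$) to the cycles of $\sigma'$ (the edges over $+\infty$); this is precisely the gluing pattern of step (3) of Construction \ref{rem-perm2cover}. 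For a single cycle $c=(n_1\dots n_m)$ with prescribed image cycle $(n_1'\dots n_m')$, the requirement $\alpha c\alpha^{-1}=(n_1'\dots n_m')$ forces $(\alpha(n_1),\dots,\alpha(n_m))$ to be a cyclic rotation of $(n_1',\dots,n_m')$, giving exactly $m$ choices. Hence every gluing bijection $\bar\alpha$ is induced by exactly $\prod_i m_i$ permutations $\alpha$ (and all $\prod_m a_m!$ such bijections occur, consistent with $|Z(\sigma)|$), so
\[
n_{\tilde\pi,\pi}=\Big(\prod_i m_i\Big)\cdot\#\{\bar\alpha : \text{gluing } \tilde\pi \text{ along } \bar\alpha \text{ yields } \pi\}.
\]

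It then remains to show $\#\{\bar\alpha:\pi_{\bar\alpha}\cong\pi\}=|\Aut(\tilde\pi)|/|\Aut(\pi)|$, where $\pi_{\bar\alpha}$ is the cover of $E$ obtained by gluing $\tilde\pi$ along $\bar\alpha$. I would prove this by double counting the set $X$ of pairs $(\bar\alpha,\phi)$ with $\phi:\pi_{\bar\alpha}\xrightarrow{\sim}\pi$ an isomorphism over $E$. Grouping by $\bar\alpha$ gives $|X|=\#\{\bar\alpha:\pi_{\bar\alpha}\cong\pi\}\cdot|\Aut(\pi)|$. On the other hand, cutting any such $\phi$ at the preimages of $p_0$ yields a self-isomorphism of $\tilde\pi$ over $\mathbb{TP}^1$, since $\tilde\pi$ is recovered by cutting both $\pi$ and $\pi_{\bar\alpha}$; conversely, any $\psi\in\Aut(\tilde\pi)$ transports the canonical re-gluing $\bar\alpha_0$ recovering $\pi$ to the unique $\bar\alpha=p_+^{-1}\circ\bar\alpha_0\circ p_-$ (with $p_-,p_+$ the permutations $\psi$ induces on the $\mp\infty$ ends) and descends to an isomorphism $\pi_{\bar\alpha}\to\pi$. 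These operations are mutually inverse, so $X$ is in bijection with $\Aut(\tilde\pi)$, giving $|X|=|\Aut(\tilde\pi)|$. Comparing the two expressions for $|X|$ yields the claim, and combining with the displayed formula proves the proposition.

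The main obstacle is the cut-and-reglue bijection $X\cong\Aut(\tilde\pi)$: I must verify that cutting an isomorphism over $E$ produces an automorphism of the \emph{same} cut cover $\tilde\pi$ (this uses that automorphisms over $\mathbb{TP}^1$ commute with the projection, fix each fiber, and so cannot interchange $-\infty$ and $+\infty$), that the reglued map genuinely descends, and that the two assignments invert each other. Here the subtle automorphisms of $\tilde\pi$ — permutations of single-edge components of equal weight, together with the swaps coming from balanced forks and (long) wieners as in Figure \ref{fig-longWiener} — must all be accounted for; these are exactly the factors making up $|\Aut(\tilde\pi)|$ in \eqref{eq-multcoverp1}, and tracking them carefully is the technical heart of the argument.
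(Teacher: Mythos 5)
Your proposal is correct, and while its first half coincides with the paper's argument, the second half takes a genuinely different route. Like the paper, you observe that the admissible $\alpha$ form a coset of the centralizer of $\sigma$, that the condition ``the gluing yields $\pi$'' depends only on the induced weight-preserving bijection $\bar\alpha$ of ends, and that each such $\bar\alpha$ is induced by exactly $m_1\cdots m_r$ permutations $\alpha$ (one cyclic rotation per cycle); this gives the factor $\prod_i m_i$ in both treatments. Where you diverge is in identifying the number of valid gluings with $|\Aut(\tilde\pi)|/|\Aut(\pi)|$: the paper proves this by an explicit case analysis of how ends of $\tilde C$ recombine --- distinguishable ends, balanced forks gluing to distinguishable ends, pairs of forks forming wieners, and the delicate bookkeeping of single-edge components feeding into long wieners versus pseudo-wieners --- checking in each case that the local contribution to the gluing count matches the local contribution to the ratio of automorphism groups. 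You instead replace all of this by a single orbit/double-counting argument on the set $X$ of pairs $(\bar\alpha,\phi)$ with $\phi\colon\pi_{\bar\alpha}\xrightarrow{\sim}\pi$: grouping by $\bar\alpha$ gives $|X|=\#\{\bar\alpha\}\cdot|\Aut(\pi)|$, while the cut-and-reglue correspondence gives $|X|=|\Aut(\tilde\pi)|$. The correspondence itself is sound --- an isomorphism over $E$ preserves fibers and local orientation relative to $E$, so it cuts to an automorphism of $\tilde\pi$ that does not exchange the $\pm\infty$ ends, and conversely any $\psi\in\Aut(\tilde\pi)$ descends to an isomorphism $\pi_{\bar\alpha}\to\pi$ for the unique conjugated gluing $\bar\alpha=p_-^{-1}\circ\bar\alpha_0\circ p_+$. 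Your approach is shorter and structurally more robust (there is no list of cases that could be incomplete, and one never needs the explicit description of $\Aut(\tilde\pi)$ as a product of fork, wiener, and single-edge-component symmetries), whereas the paper's case analysis makes the combinatorial origin of each automorphism factor concrete and directly exhibits, for instance, why pseudo-wieners contribute differently from long wieners. Both are complete proofs of the proposition.
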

\begin{proof}
As in the definition of $n_{\tilde\pi,\pi}$ (see Definition \ref{def-npipi}), fix a tuple of permutations
$(\tau_1,\ldots,\tau_{2g-2},\sigma,\sigma')$ that yields $\tilde\pi$ when applying Construction \ref{rem-perm2cover} minus the gluing step (3).

The set of
$\alpha$ such that $\alpha\circ\sigma\circ\alpha^{-1}=\sigma'$ is a coset of the
stabilizer of $\sigma$ with respect to the operation of $\Perm_d$ on itself via
conjugation:
$(\alpha,\sigma)\mapsto\alpha\circ\sigma\circ\alpha^{-1}$.
Assume that $\Delta$ consists of $k_i$ weights $w_i$ for $i=1,\ldots,s$,
then this stabilizer is isomorphic to the semidirect product $\prod_{i=1}^s
C_{w_i}^{k_i}\rtimes\prod_{i=1}^s\Perm_{k_i}$ of cyclic groups $C_{w_i}$ of
length $w_i$ and symmetric groups $\Perm_{k_i}$. 
This can be seen as follows: for each weight $w_i$ (i.e., length of a cycle of $\sigma$) we can choose an element of $\Perm_{k_i}$ permuting the cycles of length $w_i$ in $\sigma$. Assume the cycle $c_1$ of $\sigma$ is mapped to the cycle $c_2$ by this permutation. Then we consider permutations $\alpha'$ in the group of bijections of the entries of $c_2$ to the entries of $c_1$  that satisfy $\alpha' \circ c_1\circ\alpha'^{-1}=c_2$, there are $w_i$ such $\alpha'$ (and they form a cyclic group). Since the cycles of $\sigma$ are disjoint, the choices for $\alpha'$ for each pair of cycles $(c_1,c_2)$ where $c_1$ is mapped to $c_2$ under the permutations in $\Perm_{k_i}$ that we choose for each $i$ can be combined to a unique $\alpha$ in the stabilizer of $\sigma$.

We label the edges of $\tilde{C}$ with cycles as given by the choice of our tuple. 
Transferred to our situation, the argument above shows that when searching for $\alpha$ that satisfy both requirements of Definition \ref{def-npipi}, we always get the contributions from the $C_{w_i}$, (leading to a factor of $\prod_{i=1}^s w_i^{k_i}= m_1\cdot \ldots \cdot m_r$). To prove the lemma, it remains to see that $\frac{|\Aut(\tilde\pi)|}{|\Aut(\pi)|}$ equals the number of ways to choose permutations of the cycles of the same length (resp.\ permutations of the ends of $\tilde{C}$) that correspond to a gluing of $\tilde{\pi}$ equal to $\pi$ when applying Construction \ref{rem-perm2cover}, step (3).


 
So let us now analyze the automorphism groups and compare the quotient of their sizes to the possibilities to glue the cover $\tilde\pi$ (with labeled ends) to $\pi$.

The automorphism group of $\tilde \pi$ is, as mentioned above, a direct product
of symmetric groups each corresponding to a wiener, a balanced fork or the set
of connected components consisting of a single edge of fixed weight. 
The automorphism group of $\pi$ is a direct product of symmetric groups of size two corresponding to wieners.
Notice that we can have long wieners as in Figure \ref{fig-longWiener}, where the two edges of the same weight are curled equally.
Clearly automorphisms that come from wieners that are not cut cancel in the quotient and we can thus disregard them.
Since therefore all contributions to the automorphism groups we have to consider come from ends of $\tilde{C}$, and the possibilities to glue the cover $\tilde{\pi}$ to $\pi$ also only depend on the ends of $\tilde{C}$, we can analyze the situation locally on the level of the involved ends.

We say that an end of $\tilde{C}$ is distinguishable if it is not part of a balanced fork and not an
end of a component consisting of a single edge. Distinguishable ends do not
contribute to the automorphisms of $\tilde\pi$.

We have to consider several cases. We first consider cases not involving connected components consisting of a single edge.
\begin{enumerate}
 \item If we glue two distinguishable ends of $\tilde C$ to get back $C$, there are no choices for different gluings. Since distinguishable ends do not contribute to the automorphisms, the equality of contributions from these ends holds.
\item Assume that an edge of $C$ is cut in such a way that one of the ends
is part of a balanced fork and the other is distinguishable. Then obviously
there are $2$ ways to glue, see Figure \ref{fig-glueforktodist}.
\begin{figure}
\begin{center}
 \input{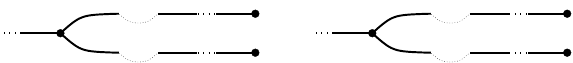_t}
\end{center}
\caption{Two ways to glue a fork to two distinguishable ends.}
\label{fig-glueforktodist}
\end{figure}
The balanced fork contributes with a factor $2$ to
$|\Aut(\tilde\pi)|$.  After gluing, the fork is not part of a wiener, so the contribution to
$|\Aut(\pi)|$ is $1$. Again, we see that the contributions coming from these ends to the quotient of the sizes of the automorphism groups on the one hand and to the possibilities of gluing on the other hand coincide.
\item If two balanced forks are glued, we obtain a wiener.
The contribution
to $|\Aut(\tilde\pi)|$ and $|\Aut(\pi)|$ is $4$ and $2$ respectively. The ways
to glue
the forks to a wiener is $2$, as illustrated in Figure \ref{fig-gluefork}.
\end{enumerate}

\begin{figure}
 \begin{center}
 \input{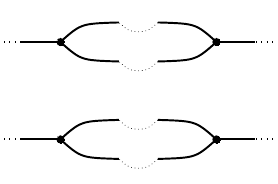_t}
\end{center}
\caption{Gluing two balanced forks to a wiener.}\label{fig-gluefork}
\end{figure}

Now we have to consider cases involving ends of connected components consisting of a single edge, say of weight $m$.
Assume there are $l$ components consisting of a single edge of weight $m$.
These ends contribute a factor of $l!$ to
$|\Aut(\tilde\pi)|$.
\begin{enumerate}\setcounter{enumi}{3}
 \item Assume that $l_0$ of the components are not part of a long
wiener after gluing. They do not
contribute to the automorphisms of $\pi$. Note that the components nevertheless
might be attached to balanced forks. In this case the fork is either
part of a pseudo-wiener in $\pi$ (i.e., two edges sharing the same end vertices and having the same weight, but curled differently, see Figure \ref{pseudoWiener})
or the two edges of the fork
have different end vertices.
 \begin{figure}
 \begin{center}
  \input{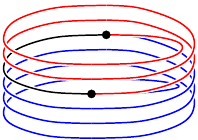_t}
 \end{center}
 \caption{A cover with a pseudo-wiener.}\label{pseudoWiener}
 \end{figure}

Let us now determine the number of ways to glue these ends of $\tilde C$ to get back $C$.
We can choose $l_0$ of the $l$ single-edge-components, and
distribute them to $l_0$ distinguishable places. Also, we
get a factor of $2$ for each balanced
fork involved. The result is $\binom{l}{l_0}\cdot l_0!\cdot 2^{f}$, where
$f$ is the number of balanced forks involved.

The remaining $l-l_0$ components must be part of long wieners in $\pi$.
Let $n$ be the number of long wieners in $\pi$, giving a contribution of
$2^n$ to $|\Aut(\pi)|$, then $2n$ balanced forks from $\tilde\pi$
are involved in the gluing process contributing with a factor of
$2^{2n}$ to $|\Aut(\tilde\pi)|$. The number of ways to glue is the
number of ways to distribute the $l-l_0$ components to $l-l_0$ gluing places and a
factor of $2$ for every wiener we get, just as in Figure \ref{fig-gluefork}.
Altogether the contribution to the number of gluings providing the 
desired cover equals $$\binom{l}{l_0}\cdot l_0!\cdot
2^{f}\cdot(l-l_0)!\cdot2^n=l!\cdot2^f\cdot2^n.$$ The contribution to the quotient of
the sizes of the automorphism groups equals $$\frac{l!\cdot 2^f\cdot 2^{2n}}{2^n}.$$ Obviously, the two expressions coincide and we are done.
\end{enumerate}
\end{proof}

We are now ready to prove the Correspondence Theorem \ref{thm-corres}, that is
the equality of tropical and algebraic Hurwitz numbers of simply ramified covers of elliptic
curves.
\begin{proof}[Proof of Theorem \ref{thm-corres}]
 By Remark \ref{eqn-ellipticCount} 
$$
 N_{d,g}
=
\frac{1}{d!}\cdot\#\left\{(\tau_1,\ldots,\tau_{2g-2},
\alpha,\sigma)\right\},$$
where $\alpha,\sigma, \tau_i\in\Perm_d$, the $\tau_i$ are transpositions, the equality $\tau_{2g-2}\circ\ldots\circ\tau_1\circ
\sigma=\alpha\circ\sigma\circ\alpha^{-1}$ holds and $\langle\tau_1,\ldots,\tau_{2g-2},\sigma\rangle$ acts
transitively on the set $\{1,\ldots,d\}$.
We can group the tuples in the set according to the tropical cover $\pi:C\rightarrow E$ they provide under Construction \ref{rem-perm2cover} and write the sum above as
$$
\frac{1}{d!}\cdot \sum_{\pi}\#\left\{(\tau_1,\ldots,\tau_{2g-2},
\alpha,\sigma)\textnormal{ yielding the cover }\pi\right\}.$$
For a fixed
cover $\pi$, instead of counting tuples yielding $\pi$, we can count tuples $(\tau_1,\ldots,\tau_{2g-2},\sigma,\sigma')$ yielding the cut cover
$\tilde\pi$ from Construction \ref{const-cutting} and then multiply with the number of appropriate $\alpha$, i.e., with
$n_{\tilde\pi,\pi}$ (see Definition \ref{def-npipi}):
$$
\frac{1}{d!}\cdot\sum_\pi
\#\left\{(\tau_1,\ldots,\tau_{2g-2},
\sigma,\sigma')\textnormal{ that provide the cover }\tilde{\pi}\right\}\cdot
n_{\tilde{\pi},\pi}.$$
By \cite{CJM10} (see also Remark \ref{rem-doubleHurwitz}) the count of the
tuples yielding a cover $\tilde\pi$ divided by $d!$ coincides with its tropical
multiplicity $\mult(\tilde\pi)=\frac{1}{|\Aut(\tilde{\pi})|}\cdot\prod_K
\frac{1}{w_K}\cdot\prod_{\tilde
e} w_{\tilde e}$ where the first product goes over all components $K$ consisting of a single edge of weight $w_K$ and the second product goes over all bounded edges $\tilde{e}$ of $\tilde{C}$ and $w_{\tilde{e}}$ denotes their weight (see (\ref{eq-multcoverp1})). Using
Proposition \ref{prop-gluingFactor}, the number $n_{\tilde{\pi},\pi}$ can be
substituted by
$\prod_{e'}w_{e'}^{c_{e'}}\cdot\frac{|\Aut(\tilde\pi)|}{|\Aut(\pi)|}$ where the
product goes over all edges $e'$ of $C$ that contain a preimage of the base
point $p_0$ of $E$ and $c_{e'}$ denotes the number of preimages in $e'$,
$c_{e'}= \#  (\pi^{-1}(p_0)\cap e') $.
We obtain
$$
N_{d,g}=\sum_\pi
\frac{1}{|\Aut(\tilde{\pi})|}\cdot\prod_{\tilde{e}}w_{\tilde{e}}
\cdot\prod_K \frac{1}{w_K}\cdot\prod_ { e' }
w_{e'}^{c_{e'}}\cdot\frac{|\Aut(\tilde\pi)|}{|\Aut(\pi)|}.
$$
An edge $e'$ of $C$ of weight $w_{e'}$ having $c_{e'}$
preimages over the base point provides exactly $c_{e'}-1$ single-edge-components of weight
$w_{e'}$  in the cut cover $\tilde\pi$. Vice versa, each such
component comes from an edge with multiple preimages over the
base point. Therefore the expression $\prod_K \frac{1}{w_K}\cdot\prod_ { e' }
w_{e'}^{c_{e'}}$ simplifies to $\prod_{e'} w_{e'}$. We obtain
$$
N_{d,g}=\sum_\pi
\frac{1}{|\Aut(\pi)|}\prod_e w_e = N_{d,g}^{\trop}$$
 and the theorem is proved.
\end{proof}

\bibliographystyle {plain}
\bibliography {bibliographie}

\end{document}